\newtheorem{theorem}{Theorem}[section]   
\newtheorem{prop}[theorem]{Proposition} \theoremstyle{definition}  \newtheorem{defn}[theorem]{Definition}
 \newtheorem{remark}[theorem]{Remark} \newtheorem{example}[theorem]{Example} 
\newtheorem{cexample}[theorem]{Counterexample} 
 \numberwithin{equation}{section}
\definecolor{mycolor}{rgb}{0.122, 0.835, 0.998}
\newmdenv[innerlinewidth=0.5pt, roundcorner=4pt,linecolor=mycolor,innerleftmargin=6pt,
innerrightmargin=6pt,innertopmargin=6pt,innerbottommargin=6pt]{mybox}
\newcommand{\norm}[1]{\left\Vert#1\right\Vert} \newcommand{\scal}[1]{\left<#1\right>}
  \newcommand{\R}{\mathbb{R}} \newcommand{\Z}{\mathbb{Z}} \newcommand{\C}{\mathbb{C}}
  \newcommand{\D}{\mathbb{D}} \newcommand{\BC}{\mathbb{T}} 
 \newcommand{\eu}{{e_+}}  \newcommand{\es}{{e_-}}
\newcommand{\Hbc}{\mathcal{H}^{2,\nu}(\BC)}
\begin{document}

\title[]{Bicomplex frames} 
\author{Aiad El Gourari}
\address{(A.E.) Department of Mathematics, Faculty of Sciences,
\newline
 Ibn Tofa\"il University, Kenitra}
\email{aiadelgourari@gmail.com}

\author{Allal Ghanmi}
\address{(A.G.) Analysis, P.D.G $\&$ Spectral Geometry. Lab. M.I.A.-S.I., CeReMAR, 
\newline Department of Mathematics, P.O. Box 1014,  Faculty of Sciences,
\newline Mohammed V University in Rabat, Morocco}
\email{ag@fsr.ac.ma}

\author{Mohammed Souid El Ainin}
\address{(M.S.) Faculty of Law, Economics and Social Sciences,
	\newline Ibn Zohr University, Agadir, Morocco}
\email{msouidelainin@yahoo.fr}

\begin{abstract}
The main purpose is to introduce the so-called bicomplex (bc)-frames which is a special extension to bicomplex infinite Hilbert spaces of the classical frames. 
The crucial result is the characterization of bc-frames in terms of their idempotent components, giving rise to  generalization of certain results to bc-frames.
 Although the extension is natural, many basic properties satisfied by classical frames do not remain valid for bc-frames, unless we restrict ourself to complex-valued Hilbert space on bicomplex numbers. By benefiting from insight provided by the classical frame theory, we discuss the construction of bc-frame operator and Weyl--Heisenberg bc-frames and we provide some new ones which are appropriate for bc-frames.
\end{abstract}

\subjclass[2010]{42C15, 41A58}

\keywords{Bicomplex; bc-frames; bc-frame operator; Weyl--Heisenberg bc-frame}

\maketitle

\section{Introduction} \label{s1}

 Frames are generalizations of orthonormal basis and can be defined as "sets" of vectors (not necessary independent) giving the explicit expansion of any arbitrary vector in the space  as a linear combination of the elements in the frame. 
They were introduced by Duffin and Schaeffer, in early fifties, in the framework of nonharmonic Fourier series (see \cite{DuffinSchaeffer1952}). 
 It is only in 1986 that a landmark development was given by Daubechies, Grossmann and Meyer \cite{DaubechiesGrassmannMeyer1986}. Since then,  they have been extensively studied in different branches of mathematics and engineering sciences, such as signal analysis 
 and especially in connection with sampling theorems \cite{Benedetto1993}, as well as in image processing,
 quantum information,  
  filter bank theory,  
 robust transmission, 
  coding and communications \cite{BolcskeiHlawatschFeichtinger1998,Daubechies1992,DonohoElad2003,EldarForney2002,Heil2010,Gavrut2006,Kovacevic2008}.
   For further details and tools on frames one can refer to the very nicely written surveys and research tutorials 
  \cite{Casazza2000,CasazzaKutyniok2012,CasazzaLynch2016,Christensen2016,Gr\"ochenig2001,HeilWalnut1989}. 
  
 The aim of the present paper is to introduce and study bicomplex (bc) frames for infinite bicomplex Hilbert spaces incorporating classical ones for complex Hilbert spaces.
 The motivation of considering the bicomplex setting lies in the fact that this model 
 can serves to represent color image encoding in image processing.  
  Example \ref{Fexple2} and the results we establish in Section 3 will illustrate further the motivation of considering bc-frames.
The main feature of bc-frames is Theorem \ref{thmchar} concerning the problem of characterizing these bc-frames in terms of the standard ones for appropriate Hilbert spaces. Accordingly, important basic and elementary  properties for bc-frames are described forthwith thanks to this special characterization.
Special attention will also be given to the problem of defining bicomplex frame operators and bicomplex Weyl--Heisenberg frames.

This paper is organized as follows. In Section 2, we review the structure of the infinite bicomplex Hilbert space including $L^{2}(\BC,e^{-\sigma |Z|^2}d\lambda)$; $\sigma \geq 0$. In Section 3, we introduce some basic notions and we state and prove our main results related to bc-frames. 
 Section 4 begins with a brief discussion of associated bicomplex frame operator. The rest of this section deals with Weyl--Heisenberg bc-frames.

In next sections $\Z$, $\R$, $\C$ and $\BC$ will respectively denote the integers, the real, the complex and the bicomplex numbers.

 \section{Preliminaries: Infinite bicomplex Hilbert spaces}

This section is a brief review of needed notions and results from the theory of infinite bicomplex Hilbert spaces. For a lot of the material and background we refer the reader to \cite{Price1991,RochonShapiro2004,RochonTremmblay2004,RochonTremmblay2006}. 
To start, recall first that the bicomplex numbers are special generalization of complex numbers. In fact, they are complex numbers $Z= z_1 + j z_2$ with complex coefficients $z_1,z_2 \in \C=\C_i$, 
where $j$ is a pure imaginary unit independent of $i$ such that $ij=ji$.
This defines a commutative (non division) algebra over $\C$ where the addition and multiplication operation are defined in a natural way. Conjugates of given $Z= z_1 + j z_2\in \BC$, with respect to $i$, $j$ and $ij$,  are defined by 
$\widetilde{Z}= \overline{z_1} + j \overline{z_2}$, $Z^\dag= z_1 - j z_2$ and $Z^*= \overline{z_1} - j \overline{z_2}$, respectively.
It should be mentioned here that the nullity of $ ZZ^\dag =z_1^2+z_2^2$, which 
is equivalent to $Z= \lambda (1\pm ij )$ for certain complex number $\lambda\in \C$, characterizes those that are zero divisors in $\BC$, while   $ZZ^{\dag}\ne 0$ characterizes those that are invertible. 
Thus by considering the idempotent elements
$$ \eu  = \frac{1+ij}2 \quad \mbox{and} \quad \es  = \frac{1-ij}2$$
we have the identities
$ \eu ^2=\eu $ $\es ^2=\es $, $\eu +\es  =1 $, $\eu -\es  =ij$ and $\eu \es =0$.
Therefore, for any $Z= z_1 + j z_2\in \BC$ there exist unique complex numbers $\alpha,\beta$, such that
\begin{align}\label{ib}
Z= (z_1 - i z_2) \eu  +  (z_1 + i z_2) \es  = \alpha \eu  +  \beta \es.
\end{align}
Here $\alpha=z_1 - i z_2,\beta=z_1 + i z_2\in$.
The idempotent representation \eqref{ib} is crucial and simplifies considerably the computation with bicomplex numbers. In particular, the different 
$Z^\dag$-, $\widetilde{Z}$- and $Z^*$-conjugates read simply 
 $$Z^\dag=\beta \eu  + \alpha \es , \quad \widetilde{Z}=\overline{\beta}\eu  + \overline{\alpha}\es  \quad\mbox{and} \quad Z^*= \overline{\alpha}\eu  + \overline{\beta}\es .$$

Infinite bicomplex Hilbert space is defined by means of a special extension of 
the notions of inner product and norm to the $\BC$-modules. More generally, if  $M$ is a $\BC$-module, we consider the $\C$-vector spaces $V^+=M\eu$ and $V^- = M \es$, so that one can see $M$ as the $\C$-vector space $M'=V^+\oplus V^-$. In general, $V^+$ and $V^-$ bear no structural similarities.
Accordingly, an inner product on $M$ is a given functional 
$$\scal{ \cdot, \cdot } : M\times M \longrightarrow \BC$$ satisfying
\begin{enumerate}
	\item $\scal{\phi, \tau\psi + \varphi }  = \tau^{*} \scal{\phi,\psi} + \scal{\phi,\varphi }$
for every $\tau\in \BC$ and $\phi, \psi , \varphi \in M$, and
\item $ \scal{\phi, \psi } = \scal{\psi, \phi }^*$
as well as 
\item $ \scal{\phi, \phi}=0$ if and only if $\phi=0$.
\end{enumerate}
Therefore, the projection $\scal{ \cdot, \cdot }_{V^\pm}$ of $\scal{ \cdot, \cdot }$ to  $V^\pm$ is a standard scalar product on $V^\pm$. Indeed, we have
$$
\scal{ \phi, \varphi } =  \scal{\phi^+, \varphi^+}_{V^+} \eu   + \scal{\phi^-, \varphi^-}_{V^-} \es  ,
$$
where $\phi, \varphi$ belong to $M $ identified to $M'=V^+ \oplus V^-$ and $\varphi^\pm:= \varphi e_\pm \in V^\pm$ and  $\psi^\pm:= \psi e_\pm \in V^\pm$.
It should be noted here that any $\BC$-scaler product on $M$ is completely determined as described above (see \cite[Theorem 2.6]{GLMRochon2010AFA}).

A bicomplex norm on a given $\BC$-module $M$ is the data of a map $\norm{\cdot}: M\longrightarrow \mathbb{R}$ satisfying
\begin{enumerate}
	\item $\norm{\cdot}$ is a norm on the vector space $V^+ \oplus V^-$, and
	\item $\norm{ \lambda \phi } \leq \sqrt{2} |\lambda| \norm{\phi}$ for all $\lambda \in \BC$ and all $\phi \in  M$.
\end{enumerate}
In this case $(M,\norm{\cdot})$ is called a normed bicomplex-module.
As for normed $\C$-vector spaces, a bicomplex norm can always be induced from a $\BC$-scalar product by considering
\begin{align}\label{inducednorm}
\norm{\phi}^2 =  \frac{1}{2} \left(  \scal{\phi^+, \phi^+}_{V^+} + \scal{\phi^-, \phi^-}_{V^-} \right)     = \left| \scal{\phi, \phi}\right|,
\end{align}
where $\phi=\phi^+ + \phi^- $ thanks to the identification of $M$ to $V^+ \oplus V^-$. The modulus $|\cdot|$ denotes the usual Euclidean norm in $\mathbb{R}^{4}$.
 The norm in \eqref{inducednorm} obeys a generalized Schwarz inequality (\cite[Theorem 3.7]{GLMRochon2010AFA})
\begin{align}\label{SchIneq}
\left|\scal{\phi, \varphi}\right| \leq \sqrt{2} \norm{\phi} \norm{\varphi} .
\end{align}

Accordingly, one defines an infinite bicomplex Hilbert space to be a $\BC$-inner product space $(M, \scal{\cdot, \cdot})$. This is complete with respect to the induced $\BC$-norm \eqref{inducednorm} which is equivalent to 	$(V^\pm, \scal{\cdot, \cdot}_{V^\pm})$ be $\C$-Hilbert spaces. This characterization  is contained in Theorems 3.4, 3.5 and Corollary 3.6 of \cite{GLMRochon2010AFA}.
As example of infinite bicomplex Hilbert space, one consider the one associated to 
the trivial bicomplex inner product
\begin{align}\label{spBC}
\scal{ Z,W}_{bc}  = ZW^{*} = \alpha \overline{\alpha'} \eu  + \beta \overline{\beta'}\es  ,
\end{align}
for $ Z= \alpha \eu  + \beta \es $ and $W=\alpha' \eu  + \beta' \es $ in  $\BC$, so that the induced bicomplex norm coincides with the usual Euclidean norm in $\mathbb{R}^{4}$ given by the modulus
\begin{align}\label{euclideannorm}
|Z|^{2}_{bc}=|z_{1}|^{2}+|z_{2}|^{2}= \frac{1}{2} \left(  |\alpha|^{2}+|\beta|^{2}\right) 
\end{align}
for given $Z=z_1 + z_2 j=\alpha \eu + \beta \es $; $z_1, z_2,\alpha,\beta \in \C=:\C_i$. We next perform the space of all $\BC$--valued measurable functions $f$ on $\BC$ subject to $\norm{f}_{bc} <+\infty$. Here $\norm{f}_{bc}$ is the bicomplex norm induced from the bicomplex inner product
 $$ \scal{ f,g}_{bc}:= \int_{\BC} \scal{ f(Z),g(Z)}_{bc} e^{-\nu |Z|^{2}}  d\lambda(Z) $$
 by means of \eqref{inducednorm}, $d\lambda(Z) $ being the Lebesgue measure on $\R^4$. 
More explicitly, we have
\begin{align}\label{bcsp}
\scal{f,g}_{bc} = \scal{f_1,g_1}_{L^{2,\nu}(\C^2)} \eu + \scal{f_2,g_2}_{L^{2,\nu}(\C^2)} \es
\end{align}
for every $f=f_1 \eu + f_2\es  ,g= g_1 \eu +  g_2 \es$. The functions
$f_k$ and $g_k$, for $k=1,2$, are seen as $\C$--valued functions on $ \C^2 $ in the variables $(z_1,z_2)$. Thus, the following decomposition 
\begin{align} \label{DecompHilbert1}
\Hbc = L^{2,\nu}(\C^2) \eu  +  L^{2,\nu}(\C^2) \es,
\end{align}
holds true thanks to 
\begin{align}\label{normL2}
\norm{f}_{\Hbc}^{2}= \frac{1}{2}  \left( \norm{f_1}_{L^{2,\nu}(\C^2)}^{2} + \norm{f_2}_{L^{2,\nu}(\C^2)}^{2} \right) .
\end{align}
Another interesting decomposition of the infinite bicomplex Hilbert space $\Hbc$ 
with respect to the idempotent representation states that 
	\begin{align} \label{DecompHilbert2}
\Hbc = L^{2,\frac{\nu}{2}}(\C^2) \eu  +  L^{2,\frac{\nu}{2}}(\C^2) \es .
\end{align}
Succinctly, for every $f\in \Hbc$, there exist $\phi^{\pm} \in L^{2,\frac{\nu}{2}}(\C^2)$ such that
	$$f(\alpha \eu+\beta \es)=\phi^+(\alpha,\beta)\eu+\phi^-(\alpha,\beta)\es$$
		with 
	\begin{align} \label{norm-bc2}
	\norm{f}_{\Hbc}^2 &=  \frac{1}{2} \left( \norm{\phi^+}^{2}_{L^{2,\frac{\nu}{2}}(\C^2)} + \norm{\phi^-}^{2}_{L^{2,\frac{\nu}{2}}(\C^2)} \right).
	\end{align}
	
In the sequel, we will denote $\Hbc$ simply $L^2(\BC)$ when $\nu=0$.

	\section{Bicomplex frames}
	
	In the sequel, $\mathcal{H}_{bc}$ will denote a separable bicomplex Hilbert space with bicomplex inner product $\scal{\cdot,\cdot}_{bc}$, linear in the
first entry, and denotes by $\norm{\cdot}_{bc}$ the associated bicomplex norm, $\norm{\cdot}_{bc}^2 =|\scal{\cdot,\cdot}|$. Let $\{f_n, n=0,1,2,\cdots\}$ be a countable family in $\mathcal{H}_{bc}$. The different notions from the classical frame theory can be extended, in a natural way, to the bicomplex setting.

  \begin{defn} 
 	The sequence $(f_n)_n$ is said to be a bicomplex basis for $\mathcal{H}_{bc}$ if for every $f\in H$ there exists a unique sequence of bicomplex numbers $(c_n)_n$ such that 
 	$$ f = \sum_{n=0}^\infty c_n f_n .$$
 	It is said to be  a bicomplex orthonormal basis if in addition $(f_n)_n$ is an orthonormal set. 
 \end{defn}
 
 \begin{defn}  A basis $(f_n)_n$ is said to be a bicomplex bounded basis if it satisfies the condition
 	$$0 < \inf |\scal{f_n,f_n}|_{bc} \leq  sup |\scal{f_n,f_n}|_{bc}< +\infty.$$
 	It is unconditional if for every
 	$f\in H$, the corresponding series $\displaystyle f=\sum_n c_n(f) f_n$; $c_n(f)\in \BC$,
 	converges for every rearrangement of its terms.
 \end{defn}

   \begin{prop}\label{Propbub}  If $(f_n)_n$; $f_n=f_n^+\eu + f_n^-\es$, is a bicomplex bounded unconditional basis for $L^2(\BC)$, then $(f_n^+)_n$ or $(f_n^-)_n$ is a bounded unconditional basis for $L^2(\C^2)$.
\end{prop}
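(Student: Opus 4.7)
My plan is to exploit the idempotent decomposition of $L^2(\BC)$, which by \eqref{DecompHilbert1} with $\nu=0$ identifies any $f \in L^2(\BC)$ with a pair $(f^+,f^-) \in L^2(\C^2) \oplus L^2(\C^2)$ via $f = f^+\eu + f^-\es$. Writing the basis vectors $f_n = f_n^+\eu + f_n^-\es$ and any bicomplex scalar $c_n = c_n^+\eu + c_n^-\es$ in this form, the multiplication rules $\eu^2=\eu$, $\es^2=\es$, $\eu\es=0$ yield
\begin{align*}
c_n f_n = c_n^+ f_n^+\,\eu + c_n^- f_n^-\,\es,
\end{align*}
so a bicomplex expansion $f = \sum_n c_n f_n$ is equivalent, by \eqref{normL2}, to the simultaneous convergence of the two complex series $f^\pm = \sum_n c_n^\pm f_n^\pm$ in $L^2(\C^2)$.

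From this componentwise equivalence I would first argue that each of $(f_n^+)_n$ and $(f_n^-)_n$ is an unconditional Schauder basis of $L^2(\C^2)$. Existence of an expansion of any $g \in L^2(\C^2)$ follows by applying the bicomplex basis hypothesis to $g\eu \in L^2(\BC)$ and reading off the $\eu$-coordinate. Uniqueness is inherited upward: any $\C$-representation $g = \sum_n a_n f_n^+$ lifts to a bicomplex expansion $g\eu = \sum_n (a_n\eu) f_n$, so uniqueness of $(c_n)_n\subset\BC$ forces $a_n\eu = c_n$, hence $a_n = c_n^+$; the symmetric argument handles $(f_n^-)_n$. Unconditionality transfers because an $L^2(\BC)$-rearrangement of the bicomplex series corresponds to the same rearrangement of each $L^2(\C^2)$-coordinate series, and convergence in $L^2(\BC)$ is equivalent to simultaneous convergence in both coordinates.

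For the norm bounds, the identities $\norm{f_n}_{bc}^2 = |\scal{f_n,f_n}_{bc}|$ and $2\norm{f_n}_{bc}^2 = \norm{f_n^+}^2 + \norm{f_n^-}^2$ immediately deliver uniform upper bounds on both $\norm{f_n^+}$ and $\norm{f_n^-}$. The bicomplex lower bound only produces the pointwise alternative $\norm{f_n^+}^2 + \norm{f_n^-}^2 \geq 2c^2$, i.e.\ $\max(\norm{f_n^+},\norm{f_n^-}) \geq c$ for every $n$.

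The main obstacle, and the reason the conclusion is phrased as an \emph{or}, is to promote this pointwise alternative into a uniform lower bound for at least one of the two sequences. My approach is to partition $\N = A \sqcup B$ with $A = \{n : \norm{f_n^+} \geq c\}$, so that $B \subseteq \{n : \norm{f_n^-} \geq c\}$, and to show that $A$ or $B$ must in fact be cofinite. Supposing both were infinite, one extracts subsequences along which the $\eu$- and the $\es$-norms respectively tend to zero, and tests the bicomplex unconditional basis constant against carefully chosen vectors of the form $g\eu$ and $h\es$ supported on these subsequences, aiming to force unbounded coefficient functionals that would contradict the unconditional convergence in $L^2(\BC)$. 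This dichotomy step, which converts localised control into global boundedness, is where the substance of the argument lies and is the point I expect to be most delicate.
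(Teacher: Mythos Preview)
Your treatment of the basis property, unconditionality, and the upper bound coincides with the paper's: both arguments pass to idempotent components via \eqref{normL2}. The divergence is at the lower bound. The paper does not attempt any dichotomy; it simply records the inequality
\[
\frac{1}{\sqrt2}\,\max\bigl(\inf_n\norm{f_n^+},\,\inf_n\norm{f_n^-}\bigr)\le\inf_n\,|\scal{f_n,f_n}|_{bc}
\]
and asserts from it that positivity of the right-hand side forces $\inf_n\norm{f_n^+}>0$ or $\inf_n\norm{f_n^-}>0$.

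Your dichotomy step has a genuine gap, and not one that can be repaired along the lines you sketch: the implication you need, namely that $\inf_n\max(\norm{f_n^+},\norm{f_n^-})>0$ forces $\max(\inf_n\norm{f_n^+},\inf_n\norm{f_n^-})>0$, is false even under the full hypotheses of the proposition. Take an orthonormal basis $(e_n)_n$ of $L^2(\C^2)$ and set $f_n^+=e_n$, $f_n^-=(n+1)^{-1}e_n$ for $n$ even, while $f_n^+=(n+1)^{-1}e_n$, $f_n^-=e_n$ for $n$ odd. Each $(f_n^\pm)_n$ is an unconditional Schauder basis of $L^2(\C^2)$, so by your own first paragraph $(f_n)_n$ is a bicomplex unconditional basis of $L^2(\BC)$; moreover $|\scal{f_n,f_n}|_{bc}=\tfrac12\bigl(1+(n+1)^{-2}\bigr)\in[\tfrac12,1]$, so $(f_n)_n$ is bicomplex-bounded. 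Yet $\inf_n\norm{f_n^+}=\inf_n\norm{f_n^-}=0$, so neither component is a bounded basis and both of your sets $A$, $B$ are infinite. Your proposed contradiction via ``unbounded coefficient functionals'' does not materialize here: in an unconditional basis the coefficient functionals are automatically continuous, and nothing prevents their norms from blowing up in both components along disjoint index sets. The step you flagged as delicate is therefore not provable as stated, and the paper's displayed inequality, read in the direction in which it is written, does not deliver it either.
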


\begin{proof} 
  Using the idempotent decomposition, the bc-basis property, and the bc-unconditionality of $(f_n)_n$ in $L^2(\BC)$ are clearly equivalent to that $(f_n^+)_n$ and $(f_n^-)_n$ being unconditional bases for $L^2(\C^2)$. We need only to prove the boundedness property. Indeed, starting from \eqref{normL2},
   we obtain 
$$ 
sup |\scal{f_n,f_n}|_{bc} \leq \frac 1{\sqrt2} \left( \sup  \norm{f_n^+} + \sup \norm{f_n^-} \right)  .
$$
Therefore, $sup |\scal{f_n,f_n}|_{bc}$ is finite if and only if $\sup  \norm{f_n^+}$ and $\sup \norm{f_n^-}$ are finites.
In a similar way, we can prove the following
$$\frac 1{\sqrt2} \sup\left( \inf   \norm{f_n^+} , \inf  \norm{f_n^-}\right) \leq inf |\scal{f_n,f_n}|_{bc} 
 .$$                                                                          
	This proves $inf |\scal{f_n,f_n}|_{bc}$ is positive if and only if $\inf   \norm{f_n^+}$ or $\inf   \norm{f_n^-} $ is positive. This completes the proof.
\end{proof}

According to the previous proof, it is evident to see that the converse of Proposition \ref{Propbub} holds true if both of the idempotent components $(f_n^+)_n$ and $(f_n^-)_n$ are assumed to be bounded unconditional basis for $L^2(\C^2)$. 
Moreover, the following assertion gives a strong version of the converse. 
 Its proof is contained in the previous one.

\begin{prop}\label{Propbubc}   
If	$(f_n^+)_n$ (or  $(f_n^-)_n$) is a bounded unconditional basis for $L^2(\C^2)$ and 
the sequences $(\norm{f_n^+} )_n$, $(\norm{f_n^-})_n$ are upper bounded, then $(f_n)_n$ is a bicomplex bounded unconditional basis for $L^2(\BC)$.
\end{prop}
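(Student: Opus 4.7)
The plan is to reverse the three equivalences that were established (and only half-used) in the proof of Proposition \ref{Propbub}. Concretely, I need to verify for $(f_n)_n$ three distinct features: (i) the bc-basis property, (ii) bc-unconditionality, and (iii) bc-boundedness $0<\inf|\scal{f_n,f_n}|_{bc}\le\sup|\scal{f_n,f_n}|_{bc}<+\infty$. Throughout I write $f_n = f_n^+\eu + f_n^-\es$ and assume without loss of generality that $(f_n^+)_n$ is the sequence which is a bounded unconditional basis of $L^2(\C^2)$.

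For (i) and (ii), I would exploit the decomposition \eqref{DecompHilbert1} together with the idempotent identities $\eu^2=\eu$, $\es^2=\es$, $\eu\es=0$. Any $f\in L^2(\BC)$ splits as $f=f^+\eu+f^-\es$ with $f^\pm\in L^2(\C^2)$, and any tentative bicomplex expansion $f=\sum_n c_n f_n$ with coefficients $c_n=c_n^+\eu+c_n^-\es\in\BC$ splits componentwise into two ordinary complex expansions $f^+=\sum_n c_n^+ f_n^+$ and $f^-=\sum_n c_n^- f_n^-$. Hence existence, uniqueness, and unconditionality of such a bc-expansion are each equivalent to the corresponding property holding simultaneously for $(f_n^+)_n$ and $(f_n^-)_n$ in $L^2(\C^2)$. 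This is exactly the reverse direction of the equivalences invoked in the proof of Proposition \ref{Propbub}, and it yields (i) and (ii) from the hypothesis that (at least) one of $(f_n^\pm)_n$ is an unconditional basis, since the analogous property for the other component is tacit in the picture (both are required to be unconditional bases; only boundedness-below is asked of a single one).

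For (iii), I would rely on the norm identity \eqref{normL2}, namely
$$
\norm{f_n}_{\Hbc}^{2} = \tfrac{1}{2}\bigl(\norm{f_n^+}^2_{L^{2,\nu}(\C^2)} + \norm{f_n^-}^2_{L^{2,\nu}(\C^2)}\bigr),
$$
together with the identification $|\scal{f_n,f_n}|_{bc}=\norm{f_n}_{\Hbc}^{2}$ coming from \eqref{inducednorm}. The upper bound follows at once from the assumed finiteness of $\sup_n\norm{f_n^+}$ and $\sup_n\norm{f_n^-}$. For the lower bound I would use the one-sided estimate $\norm{f_n}_{\Hbc}^{2}\ge\tfrac{1}{2}\norm{f_n^+}^{2}\ge\tfrac{1}{2}(\inf_m\norm{f_m^+})^{2}>0$, positivity being provided by the bounded basis hypothesis on $(f_n^+)_n$. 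This is precisely the second chain of inequalities written in the proof of Proposition \ref{Propbub}, read in the reverse direction.

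I expect the main obstacle here to be a bookkeeping one rather than a deep analytic one: making precise in what sense the disjunction in the hypothesis is compatible with the need, in steps (i)-(ii), that \emph{both} idempotent components be unconditional bases of $L^2(\C^2)$. As in Proposition \ref{Propbub}, the "or" really belongs to the boundedness-below portion alone; both sequences must be unconditional bases, only one of them being additionally bounded below, while the hypothesis $\sup_n\norm{f_n^+}<\infty$ and $\sup_n\norm{f_n^-}<\infty$ simultaneously secures the upper bc-bound.
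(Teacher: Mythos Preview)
Your approach is essentially the same as the paper's: the paper simply states that the proof is contained in that of Proposition \ref{Propbub}, and you are correctly reading the equivalences there in the reverse direction---the idempotent decomposition for the basis and unconditionality properties, and the two displayed norm inequalities for the boundedness. Your added remark about the ``or'' in the hypothesis pertaining only to boundedness-below (while both components must be unconditional bases for steps (i)--(ii) to go through) is a helpful clarification that the paper leaves implicit.
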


In analogy with the standard case, we propose the following definitions for bc-Reisz bases and bc-frames.
 
 \begin{defn} A sequence $(f_n)_n$ is a bicompex Riesz basis for $\mathcal{H}_{bc}$ if $\overline{span\{f_n; n \}} = \mathcal{H}_{bc}$ and
 	there exist $A, B > 0$ such that 
 	$$ A \sum_{n=0}^\infty |c_n|^2_{bc} \leq \norm{\sum_{n=0}^\infty c_n f_n}^2_{bc} \leq B \sum_{n=0}^\infty |c_n|^2_{bc} $$
 	for all $\displaystyle f=\sum_n c_n f_n  \in H$.
 \end{defn}
 
 \begin{defn} The sequence $(f_n)_n$ is called a bc-frame for $\mathcal{H}_{bc}$ if there are two constants $0 < A \leq B $ (frame bounds) such  that for every $f \in H$, we have
 	$$ A \norm{f}^2_{bc} \leq \sum_{n=0}^\infty |\scal{f,f_n}|^2_{bc} \leq B  \norm{f}^2_{bc} .$$
 	A bc-frame is said to be
 	tight if in addition $A=B$ and a Parseval bc-frame if $ A=B=1$. 
 	It is called exact if it ceases to be a bc-frame whenever any single element is deleted from it.   
 \end{defn}

The following example is specific for the bc-frames and is generated by a given orthonormal basis $e_n$ of $L^2(\C^2)$.

  	\begin{example}\label{Fexple} Let $(c_n)_n$ be bicomplex sequence such that $c_n=a_n\eu +b_n\es$; $a_n,b_n\in\C$, and consider the set of functions
  		$$e_{m,n}(z_1+jz_2):= a_ne_m(z_1,z_2) \eu + b_me_n(z_1,z_2)\es .$$
  	If $\sum\limits_{n=0}^\infty |a_n|^2=a$ and $\sum\limits_{n=0}^\infty |b_n|^2=b$ converge, then $e_{m,n}$, for varying $m$ and $n$, is a bc-frame for $L^2(\BC)$. 
  Indeed, by means of \eqref{bcsp}, we get 
  $\scal{ e_{m,n}, e_{j,k}}_{bc}=  a_n \overline{a_k} \delta_{m,j} \eu + b_m \overline{b_j} \delta_{n,k} \es$.
  This shows that $e_{m,n}$ is not orthogonal nor necessary normalized. However, direct computation shows that 
  $$ \sum_{m,n=0}^\infty |\scal{f,e_{m,n}}|^2_{bc} = \frac 12 \left( a \norm{f^+}^2_{L^2(\C^2)} +  b \norm{f^-}^2_{L^2(\C^2)} \right) .$$ 
  Therefore, 
  $$ \min(a,b) \norm{f}^2_{bc}\leq  \sum_{m,n=0}^\infty |\scal{f,e_{m,n}}|^2_{bc}  \leq \max(a,b) \norm{f}^2_{bc}.$$
  By specifying $(a_n)_n$ and $(b_n)_n$, we get tight and Parseval bc-frames.
\end{example}

The next result is a fundamental tool in our expository and characterize the bc-frames in terms of the classical ones. 

  	\begin{theorem}\label{thmchar}
    	The sequence $(f_n)_n$ is a bc-frame for $L^2(\BC)$ with best frame bounds $A$ and $B$ if and only if their components $(f_n^\pm)_n$, $f_n=f_n^+\eu+f_n^-\es$, are frames for $L^2(\C^2)$ with best frame bounds $a^\pm$ and $b^\pm$. Moreover, we have $A=\min\{a^+,a^-\}$ and $B=\max\{b^+,b^-\}$. 
  \end{theorem}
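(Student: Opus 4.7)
\medskip

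\textbf{Proof plan.} The strategy is to exploit the idempotent decomposition \eqref{DecompHilbert1} to reduce everything to pairs of independent identities in the complex Hilbert space $L^2(\C^2)$. The single computation driving the whole proof is the following: for $f=f^+\eu+f^-\es$ and $f_n=f_n^+\eu+f_n^-\es$, formula \eqref{bcsp} gives
\begin{equation*}
\scal{f,f_n}_{bc}=\scal{f^+,f_n^+}_{L^2(\C^2)}\eu+\scal{f^-,f_n^-}_{L^2(\C^2)}\es,
\end{equation*}
so by \eqref{euclideannorm} applied to this bicomplex number,
\begin{equation*}
|\scal{f,f_n}|^2_{bc}=\tfrac12\bigl(|\scal{f^+,f_n^+}|^2+|\scal{f^-,f_n^-}|^2\bigr).
\end{equation*}
Summing in $n$ and using \eqref{normL2} to rewrite $\norm{f}^2_{bc}=\frac12(\norm{f^+}^2+\norm{f^-}^2)$ yields the master identity
\begin{equation*}
\sum_{n=0}^{\infty}|\scal{f,f_n}|^2_{bc}=\tfrac12\Bigl(\sum_n|\scal{f^+,f_n^+}|^2+\sum_n|\scal{f^-,f_n^-}|^2\Bigr).
\end{equation*}

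\textbf{Sufficiency.} Assume $(f_n^\pm)_n$ are frames for $L^2(\C^2)$ with best bounds $a^\pm,b^\pm$. Apply the corresponding two inequalities $a^\pm\norm{f^\pm}^2\leq\sum_n|\scal{f^\pm,f_n^\pm}|^2\leq b^\pm\norm{f^\pm}^2$ and insert them into the master identity. The lower estimate gives
\begin{equation*}
\tfrac12\bigl(a^+\norm{f^+}^2+a^-\norm{f^-}^2\bigr)\geq \min\{a^+,a^-\}\,\norm{f}^2_{bc},
\end{equation*}
and symmetrically $\max\{b^+,b^-\}\,\norm{f}^2_{bc}$ is an upper bound; hence $(f_n)_n$ is a bc-frame with bounds $\min\{a^+,a^-\}$ and $\max\{b^+,b^-\}$.

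\textbf{Necessity and optimality.} Conversely, assume $(f_n)_n$ is a bc-frame with best bounds $A,B$. Test the bc-frame inequality on vectors of the restricted form $f=f^+\eu$ (allowed thanks to \eqref{DecompHilbert1}): the master identity collapses the ``$-$''-block and the factors of $\tfrac12$ cancel, leaving
\begin{equation*}
A\,\norm{f^+}^2\leq\sum_n|\scal{f^+,f_n^+}|^2\leq B\,\norm{f^+}^2,
\end{equation*}
which shows that $(f_n^+)_n$ is a frame for $L^2(\C^2)$; the same argument with $f=f^-\es$ handles $(f_n^-)_n$. For the identification of the best constants I would use the standard variational characterization $a^\pm=\inf_{f^\pm\neq 0}\sum_n|\scal{f^\pm,f_n^\pm}|^2/\norm{f^\pm}^2$ (and analogously for $b^\pm$, $A$, $B$). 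The specialization just used gives $A\leq a^\pm$ and $B\geq b^\pm$, i.e.\ $A\leq\min\{a^+,a^-\}$ and $B\geq\max\{b^+,b^-\}$, while the sufficiency step shows the reverse inequalities. Hence $A=\min\{a^+,a^-\}$ and $B=\max\{b^+,b^-\}$.

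\textbf{Main obstacle.} There is no serious analytical difficulty: the proof is an orchestration of the two decompositions \eqref{bcsp} and \eqref{normL2}. The one point that requires care is tracking the factor $\tfrac12$ in the bc-norm through both inequalities and the variational characterization, and making sure that the optimality argument uses \emph{both} the restriction to a single idempotent component (to get $A\leq \min\{a^+,a^-\}$, $B\geq\max\{b^+,b^-\}$) and the combined estimate on arbitrary $f$ (to get the reverse bounds). Once those two halves are matched, the equalities $A=\min\{a^+,a^-\}$ and $B=\max\{b^+,b^-\}$ drop out.
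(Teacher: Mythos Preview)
Your proposal is correct and follows essentially the same route as the paper: both arguments reduce the bc-frame inequality, via the idempotent decompositions \eqref{bcsp} and \eqref{normL2}, to the pair of complex frame inequalities, then establish optimality by combining the restriction to a single idempotent component (yielding $A\leq\min\{a^+,a^-\}$, $B\geq\max\{b^+,b^-\}$) with the converse estimate on arbitrary $f$. Your presentation is slightly more explicit about the factor $\tfrac12$ and the variational description of best bounds, but the logical skeleton is identical.
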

  
  \begin{proof}
  	Let  $f_n,f\in L^2(\BC)$ and write them as $f_n=f_n^+\eu+f_n^-\es$ and $f=f^+\eu+f^-\es$ with $f^+,f^-,f_n^+,f_n^-\in L^2(\C^2)$. 
  	Then, in virtue of \eqref{normL2},   
the condition 
  	 $$ A \norm{f}^2 \leq \sum_{n=0}^\infty |\scal{f,f_n}|^2 \leq B  \norm{f}^2 $$
  	 for $f \in L^2(\BC)$ is equivalent to
  	 \begin{align}\label{eq}
  	  A &\left( \norm{f^+}^2_{L^2(\C^2)} + \norm{f^-}^2_{L^2(\C^2)}\right) \nonumber \\ &\qquad\qquad\qquad \leq \sum_{n=0}^\infty \left( \left|\scal{f^+,f_n^+}_{L^2(\C^2)}\right|^2 + \left|\scal{f^-,f_n^-}_{L^2(\C^2)}\right|^2 \right) 
  	  \\& \qquad\qquad\qquad \qquad \qquad \qquad \qquad \qquad  \leq B\left( \norm{f^+}^2_{L^2(\C^2)} + \norm{f^-}^2_{L^2(\C^2)}\right)  \nonumber
  	 \end{align}  	
  for every $f^+,f^-\in L^2(\C^2)$. 
  Accordingly, if $(f_n)_n$ is a bc-frame for $L^2(\BC)$ with best frame bounds $A$ and $B$ then the components $(f_n^+)_n$ (resp. $(f_n^-)_n$) is a frame for $L^2(\C^2)$ by taking $f^-=0$ (resp. $f^+=0$) in \eqref{eq}. Their best frame bounds $a^+,b^+$ (resp. $a^-,b^-$) satisfy $A\leq\min\{a^+,a^-\}$ and $B\geq\max\{b^+,b^-\}$. 
  Conversely, if $(f_n^+)_n$ and $(f_n^-)_n$ are frames for $L^2(\C^2)$ with best frame bounds $a^+,b^+$ and $a^-,b^-$, respectively, then  $(f_n)_n$ is a bc-frame for $L^2(\BC)$ with best frame bounds $A$ and $B$ satisfying $A\geq\min\{a^+,a^-\}$ and $B\leq\max\{b^+,b^-\}$. 
  This completes the proof of that $(f_n)_n$ is a bc-frame for $L^2(\BC)$  if and only $(f_n^+)_n$ and $(f_n^-)_n$ are frames for $L^2(\C^2)$ with $A=\min\{a^+,a^-\}$ and $B=\max\{b^+,b^-\}$. 
  \end{proof}

\begin{remark}
	The assertion of Proposition \ref{Propbub}  and Theorem \ref{thmchar} remain valid for bc-frames $(f_n)_n$ for general bicomplex Hilbert space $\mathcal{H}_{bc}=\mathcal{H}^+\eu + \mathcal{H}^-\es$, with $ f=f^+\eu + f^-\es$ and $f^\pm \in \mathcal{H}^\pm$.
\end{remark}
 
   	\begin{example}\label{Fexple2} Let $(c_n)_n$ a bicomplex sequence as in Example \ref{Fexple}. Let $(f_n)_n$, with $f_n= f_n^+\eu + f_n^- \es$, be a bc-frame for $L^2(\BC)$. Then, the set of functions
 	$$f_{m,n}(z_1+jz_2):= a_nf_m^+(z_1,z_2) \eu + b_mf_n^-(z_1,z_2)\es $$
 	defines a new  bc-frame for $L^2(\BC)$. This immediately follows from Theorem \ref{thmchar} and generalizes Example  \ref{Fexple}.
 \end{example}

   	\begin{remark}\label{} Theorem \ref{thmchar} and Example \ref{Fexple2} show why bc-frames may be of interest and that they contain the classical ones as particular subclasses. In fact, since $L^2(\C^2)$ can be embedded in a natural way in $L^2(\BC)$, any frame $(h_n)_n$ for $L^2(\C^2)$ can be seen as a bc-frame for  $L^2(\BC)$ by considering $h_n \eu + h_n \es = h_n$. 
\end{remark}

  According to Theorem \ref{thmchar}, a number of important properties for bc-frames are described forthwith thanks to the previous characterization. For example, it is well known that classical Riesz basis can be characterized as the data of a sequence $(f_n)_n$ in $\mathcal{H}_{bc}$ which is the image of an orthonormal basis under a bounded invertible linear operator. 
  We claim that this remains valid for bicomplex Riesz basis. This readily follows from the following fact 
  (whose proof is similar to the one provided to Theorem \ref{thmchar}). 
  
    \begin{prop}\label{PropRiesz} The sequence $(f_n)_n$ is a bc-Riesz basis if and only the idempotent components $(f_n^\pm)_n$ are Riesz bases.
     \end{prop}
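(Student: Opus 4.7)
The plan is to mimic the proof of Theorem~\ref{thmchar}, exploiting the idempotent decomposition on both the vector and the scalar sides. First I would write $f_n = f_n^+\eu + f_n^-\es$ and, for any bicomplex sequence $(c_n)_n$, decompose each coefficient as $c_n = c_n^+\eu + c_n^-\es$ with $c_n^\pm\in\C$. The identities $\eu^2=\eu$, $\es^2=\es$ and $\eu\es=0$ give
$$\sum_n c_n f_n = \Big(\sum_n c_n^+ f_n^+\Big)\eu + \Big(\sum_n c_n^- f_n^-\Big)\es,$$
so that \eqref{normL2} yields
$$\Big\|\sum_n c_n f_n\Big\|_{bc}^2 = \frac{1}{2}\Big(\Big\|\sum_n c_n^+ f_n^+\Big\|^2 + \Big\|\sum_n c_n^- f_n^-\Big\|^2\Big),$$
and similarly $\sum_n |c_n|^2_{bc} = \tfrac12\big(\sum_n |c_n^+|^2 + \sum_n |c_n^-|^2\big)$ by \eqref{euclideannorm}.

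Substituting these two identities into the bc-Riesz two-sided inequality yields a statement of exactly the same shape as (3.1) in the proof of Theorem~\ref{thmchar}, namely a joint two-sided inequality in the $+$- and $-$-components. Specializing to $c_n^-=0$ (respectively $c_n^+=0$) isolates the usual Riesz inequality for $(f_n^+)_n$ (respectively $(f_n^-)_n$) in $L^2(\C^2)$, with constants sandwiched by $A$ and $B$; conversely, adding the two independent complex Riesz inequalities and using $\min\{A^+,A^-\}$ and $\max\{B^+,B^-\}$ as bc-Riesz bounds recovers the bc-Riesz inequality for $(f_n)_n$. For the totality condition, the Hilbert-space decomposition $\mathcal{H}_{bc}=\mathcal{H}^+\eu+\mathcal{H}^-\es$ together with the splitting formula above shows that $\overline{\mathrm{span}\{f_n\}}=\mathcal{H}_{bc}$ is equivalent to $\overline{\mathrm{span}\{f_n^\pm\}}=\mathcal{H}^\pm$ separately: the nontrivial direction uses that any complex linear combination in one component lifts to a bicomplex one by setting the opposite coefficient to zero.

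The main obstacle I foresee is purely bookkeeping, namely checking that the idempotent splitting commutes with infinite summation in $\mathcal{H}_{bc}$; this amounts to applying \eqref{normL2} to partial sums and passing to the limit, so no conceptual difficulty arises beyond what is already used in Theorem~\ref{thmchar}. In particular, the entire argument is a direct transcription of that proof with $\norm{f}_{bc}^{2}$ replaced by $\norm{\sum_n c_n f_n}_{bc}^{2}$ on the left and $\sum_n |\scal{f,f_n}|^2_{bc}$ replaced by $\sum_n |c_n|^2_{bc}$ on the right.
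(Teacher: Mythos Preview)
Your proposal is correct and follows exactly the route the paper indicates: the paper does not write out a proof for this proposition but simply states that it ``is similar to the one provided to Theorem~\ref{thmchar}.'' Your plan---splitting both the vectors $f_n$ and the scalars $c_n$ idempotentically, reducing the bc-Riesz inequality via \eqref{normL2} and \eqref{euclideannorm} to a joint inequality in the $\pm$-components, then specializing/adding as in \eqref{eq}, and handling totality by lifting complex combinations to bicomplex ones---is precisely that similarity spelled out in full.
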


  Moreover, we have the following characterization of completeness  of bc-frame $(f_n)_{n}$.
  
  \begin{prop}\label{PropParseval}
  	The bc-frame $(f_n)_{n}$ is complete in $L^2(\BC)$ if and only if $(f_n^+)_{n}$ and $(f_n^-)_{n}$ are both complete in $L^2(\C^2)$.
  	The same observation holds true for the Parseval bc-frames. 
  \end{prop}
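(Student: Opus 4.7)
The plan is to exploit the idempotent decomposition $L^2(\BC) = L^2(\C^2)\eu + L^2(\C^2)\es$ together with Theorem \ref{thmchar}, so that both assertions reduce to separate statements about the two components. I would handle completeness first and then Parsevalness.

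For completeness, I would start by observing that for any bicomplex coefficient $c_n = a_n \eu + b_n \es$ and any $f_n = f_n^+\eu + f_n^-\es$, the identities $\eu^2=\eu$, $\es^2=\es$, $\eu \es = 0$ give
$$ c_n f_n = a_n f_n^+ \eu + b_n f_n^- \es. $$
Hence every finite bicomplex linear combination of $(f_n)_n$ decomposes into a pair consisting of a finite complex linear combination of $(f_n^+)_n$ sitting in $L^2(\C^2)\eu$ and an independent one of $(f_n^-)_n$ in $L^2(\C^2)\es$. Combining this with the norm formula \eqref{normL2}, approximation of $f = f^+\eu + f^-\es \in L^2(\BC)$ by such combinations is equivalent to simultaneous approximation of $f^+$ by $(f_n^+)_n$ and of $f^-$ by $(f_n^-)_n$ in $L^2(\C^2)$. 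This immediately yields the equivalence: $(f_n)_n$ is complete in $L^2(\BC)$ if and only if both $(f_n^+)_n$ and $(f_n^-)_n$ are complete in $L^2(\C^2)$.

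For the Parseval part, I would simply invoke Theorem \ref{thmchar}. Since the best bc-frame bounds are
$$A = \min\{a^+,a^-\}, \qquad B = \max\{b^+,b^-\},$$
the Parseval condition $A = B = 1$ for $(f_n)_n$ forces $\min\{a^+,a^-\} = \max\{b^+,b^-\} = 1$. Because $a^\pm \leq b^\pm$ for any frame, this squeezes $a^+ = a^- = b^+ = b^- = 1$, which is precisely the Parseval condition for both idempotent components. The converse is the same chain of equalities read in the other direction.

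I do not foresee any genuine obstacle: the whole proof is structural, and Theorem \ref{thmchar} has already done the analytic work. The only mildly delicate step is making sure that the bicomplex span decomposes cleanly into the two complex spans (i.e.\ that one may choose $a_n$ and $b_n$ independently), which follows immediately from the identities between $\eu$ and $\es$ recalled above.
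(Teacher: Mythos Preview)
Your proposal is correct and follows exactly the route the paper intends: the proposition is stated without proof in the paper, being presented as an immediate consequence of the idempotent decomposition and Theorem~\ref{thmchar}. Your argument for completeness via the splitting of bicomplex linear combinations together with \eqref{normL2}, and for the Parseval case via the optimal bounds $A=\min\{a^+,a^-\}$, $B=\max\{b^+,b^-\}$ from Theorem~\ref{thmchar}, is precisely how one unpacks what the paper leaves implicit.
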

  
  However, one has to be careful as shown by the following assertions.
   
  \begin{prop}\label{Proptight} If $(f_n)_n$ is a tight bc-frame for $L^2(\BC)$, then 
  $(f_n^+)_n$ and $(f_n^-)_n$ are tight frames for $L^2(\C^2)$.
 \end{prop}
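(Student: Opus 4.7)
The plan is to obtain the conclusion as a direct consequence of Theorem \ref{thmchar}, which already identifies the best bc-frame bounds of $(f_n)_n$ with the $\min$/$\max$ of the best frame bounds of the two idempotent components. The only content to extract is the observation that a min equals a max only when everything in between collapses to a common value.

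First, I would apply Theorem \ref{thmchar} to $(f_n)_n$ to produce the best frame bounds $a^\pm \leq b^\pm$ of $(f_n^+)_n$ and $(f_n^-)_n$ in $L^2(\C^2)$, together with the identities $A = \min\{a^+,a^-\}$ and $B = \max\{b^+,b^-\}$. The hypothesis of tightness gives $A = B$, and since $a^\pm \le b^\pm$ always holds for best frame bounds of a single frame, one has the chain
\begin{align*}
\min\{a^+,a^-\} \;\le\; a^+ \;\le\; b^+ \;\le\; \max\{b^+,b^-\}, \\
\min\{a^+,a^-\} \;\le\; a^- \;\le\; b^- \;\le\; \max\{b^+,b^-\}.
\end{align*}
Under $A=B$ both chains collapse, so $a^+ = b^+$ and $a^- = b^-$, which is exactly tightness of $(f_n^+)_n$ and $(f_n^-)_n$ as frames of $L^2(\C^2)$. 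As a byproduct, the common value equals the bc-frame bound $A=B$.

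There is no real obstacle here beyond being precise about what ``best frame bounds'' means, i.e.\ the optimality $a^\pm \le b^\pm$ that is implicit in Theorem \ref{thmchar}. I would also point out, as a small remark in the write-up, that the converse fails in general: two tight frames with different bounds $a^+ \neq a^-$ produce a bc-frame that is not tight, which motivates the asymmetry between Proposition \ref{Proptight} and Proposition \ref{PropParseval}, where the Parseval normalization $A=B=1$ forces $a^+=b^+=a^-=b^-=1$ and hence the converse holds.
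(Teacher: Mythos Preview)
Your proposal is correct and follows exactly the approach the paper intends: the paper's own proof consists of the single sentence ``straightforward using the direct implication in Theorem \ref{thmchar}'', and you have merely spelled out the min/max collapse that this entails. Your added remark on the failure of the converse also matches the paper's own remark following the proposition.
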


\begin{proof}
	The proof is straightforward using the direct implication in Theorem \ref{thmchar}.
	\end{proof}

\begin{remark}
   The converse is not in general true unless $(f_n^+)_n$ and $(f_n^-)_n$ are tight frames with the same best frame bounds. 
   \end{remark}
    
 \begin{prop}\label{Propexact}
  	Let $(f_n)_n$ be a bc-frame for $L^2(\BC)$ and assume that
  $(f_n^+)_n$ (or $(f_n^-)_n$) is an exact frame for $L^2(\C^2)$. Then $(f_n)_n$ is an exact bc-frame for $L^2(\BC)$. 
    \end{prop}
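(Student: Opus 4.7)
The plan is to derive exactness of the bc-frame directly from the characterization provided by Theorem \ref{thmchar}. Recall that $(g_n)_n$ ceases to be a bc-frame when an element is removed if and only if at least one of its idempotent component sequences ceases to be a frame for $L^2(\C^2)$. So the task reduces to transferring the failure of the lower frame bound for a deleted complex-valued sequence up to a failure of the lower bc-frame bound.

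Concretely, without loss of generality assume that $(f_n^+)_n$ is the exact frame for $L^2(\C^2)$. Fix an arbitrary index $n_0$ and consider the reduced sequence $(f_n)_{n\neq n_0}$ in $L^2(\BC)$. Its idempotent components are exactly $(f_n^+)_{n\neq n_0}$ and $(f_n^-)_{n\neq n_0}$, obtained from the original component sequences by deleting the single element indexed by $n_0$. By the exactness hypothesis for $(f_n^+)_n$, the reduced sequence $(f_n^+)_{n\neq n_0}$ fails to be a frame for $L^2(\C^2)$. By the contrapositive of Theorem \ref{thmchar}, the reduced sequence $(f_n)_{n\neq n_0}$ cannot be a bc-frame for $L^2(\BC)$, since one of its idempotent components is not a frame. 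Because $n_0$ was arbitrary, deleting any element of $(f_n)_n$ destroys the bc-frame property, which is precisely what exactness means.

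The only delicate point worth noting is that the failure of the frame property for $(f_n^+)_{n\neq n_0}$ must be a failure of the lower bound: removing one element can only decrease the Bessel sum, so the upper bound $b^+$ of $(f_n^+)_n$ still serves as an upper bound for the reduced sequence. Thus the sharper form of Theorem \ref{thmchar}, asserting that the best lower bc-frame bound equals $\min\{a^+,a^-\}$, ensures that once $a^+$ collapses the best lower bc-frame bound collapses with it. I do not anticipate any genuine obstacle here; the proposition is essentially a bookkeeping corollary of Theorem \ref{thmchar} once one observes that exactness is a statement about single-element deletions, which behave coordinate-wise with respect to the idempotent decomposition.
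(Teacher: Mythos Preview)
Your proof is correct and follows essentially the same route as the paper: both arguments reduce exactness to Theorem \ref{thmchar} by observing that deleting an index acts componentwise on the idempotent decomposition, the paper simply phrasing it contrapositively (non-exactness of $(f_n)_n$ forces non-exactness of both components). Your third paragraph about the lower bound is unnecessary---Theorem \ref{thmchar} already gives the full equivalence, so once one component fails to be a frame the bc-frame property fails regardless of which bound breaks.
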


\begin{proof}  
	  The non-exactness of $(f_n)_n$ is equivalent to the existence of some $u$ such that $(f_n)_{n\ne u}$ is still a bc-frame. By Theorem \ref{thmchar}, this is equivalent to the sets $(f_n^+)_{n\ne u}$ and $(f_n^-)_{n\ne u}$ be frames for $L^2(\C^2)$. This means that $(f_n^+)_{n}$ and $(f_n^-)_{n}$ are both not exact (at last at $u$).  This completes the proof.
\end{proof}

	From this, there is non reason to have the converse of Proposition \ref{Propexact} "$(f_n)_{n}$ is exact for $L^2(\BC)$ implies $(f_n^+)_{n}$ or $(f_n^-)_{n}$ is an exact frame for $L^2(\C^2)$". This is not true in general as shown by the following counterexample. 

\begin{cexample}\label{CExp} Let $(\varphi_n)_n$ and $(\psi_n)_n$ be two frames for $L^2(\C^2)$ such that $(\varphi_n)_{n\ne u}$ and $(\psi_n)_{n\ne v}$ are exact with $u\ne v$. We assume that $u$ (resp. $v$) is only value satisfying this property.
	Example of such frame exists and one can consider $(\varphi_n)_n=\{ e_0\} \cup (e_n)_{n\geq 0}$, where $(e_n)_n$ is an orthonormal basis. 
	We next perform $f_n = \varphi_n \eu + \psi_n\es$ which  clearly is a bc-frame for $L^2(\BC)$ (by Theorem \ref{thmchar}). Moreover, it is exact, i.e.,  
	$(f_n)_{n\ne m}$ ceases to be a bc-frame for any arbitrary $m$. To this end, notice that we necessary have $m\ne u$ or $m \ne v$. For the first case for example ($m\ne u$),  
	the first component $(f_n^+)_{n\ne m} = (\varphi_n)_{n\ne m}$ is not a frame for $L^2(\C^2)$ by assumption  ($(\varphi_n)_n$ is exact at $u$ only), and therefore one concludes for $(f_n)_{n\ne m}$ by making again use of Theorem \ref{thmchar}. 
\end{cexample}

	If we denote by $N_{Exact}((f_n)_n)$ the set of indices $k$ for which $(f_n)_{n\ne k}$ is not a frame, 
	$$  N_{Exact}((f_n)_n) =\{k; \, (f_n)_{n\ne k} \mbox{ is a frame} \},$$
	then the exactness of $(f_n)_n$ becomes equivalent to  $N_{Exact}((f_n)_n)=\emptyset$. Subsequently, 
		\begin{align}\label{idNexact}  N_{Exact}((f_n)_n) =  N_{Exact}((f_n^+)_n) \cap  N_{Exact}((f_n^-)_n) 
		\end{align}
		by means of Theorem \ref{thmchar} and the definition of $N_{Exact}((f_n)_n)$. Hence, Proposition \ref{Propexact} appears as particular case of the following 
	
\begin{theorem}
	 A bc-frame  $(f_n)_n$ for $L^2(\BC)$ is exact if and only if 
	 $$  N_{Exact}((f_n^+)_n) \cap  N_{Exact}((f_n^-)_n)=\emptyset.$$ 
\end{theorem}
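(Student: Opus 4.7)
The plan is to leverage the identity \eqref{idNexact} already stated just before the theorem, which itself is an immediate consequence of Theorem \ref{thmchar}. The argument is in essence a chain of three equivalences with no analytic content beyond what is already in \eqref{idNexact} and the definition of $N_{Exact}$.

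First I would unwind the definition of exactness. By the discussion preceding the theorem, the bc-frame $(f_n)_n$ being exact is, by definition, the statement that deleting any single element destroys the bc-frame property; equivalently, there is no index $k$ for which $(f_n)_{n\neq k}$ remains a bc-frame for $L^2(\BC)$. In the notation just introduced, this reads $N_{Exact}((f_n)_n)=\emptyset$.

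Next I would justify the identity \eqref{idNexact}. For any fixed index $k$, apply Theorem \ref{thmchar} to the sequence $(f_n)_{n\neq k}$ with idempotent components $(f_n^+)_{n\neq k}$ and $(f_n^-)_{n\neq k}$: the subsequence $(f_n)_{n\neq k}$ is a bc-frame for $L^2(\BC)$ if and only if both component subsequences are frames for $L^2(\C^2)$. Translating this pointwise into the membership statement for $N_{Exact}$ gives
$$k\in N_{Exact}((f_n)_n) \iff k\in N_{Exact}((f_n^+)_n)\ \text{and}\ k\in N_{Exact}((f_n^-)_n),$$
which is exactly \eqref{idNexact}.

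Finally, chaining the two equivalences yields the theorem: $(f_n)_n$ is exact $\iff N_{Exact}((f_n)_n)=\emptyset \iff N_{Exact}((f_n^+)_n)\cap N_{Exact}((f_n^-)_n)=\emptyset$. There is really no obstacle here; the only mild care to take is to ensure that Theorem \ref{thmchar} can be invoked for the deleted sequence $(f_n)_{n\neq k}$ (which is automatic, since the characterization applies to any countable family indexed by $\N\setminus\{k\}$ just as well as to one indexed by $\N$). Proposition \ref{Propexact} then falls out as the special case where $N_{Exact}((f_n^+)_n)=\emptyset$ or $N_{Exact}((f_n^-)_n)=\emptyset$ forces the intersection to be empty.
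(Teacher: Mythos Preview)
Your proof is correct and follows the same route as the paper: the theorem is an immediate consequence of the identity \eqref{idNexact} (itself derived from Theorem \ref{thmchar}) together with the observation that exactness means $N_{Exact}((f_n)_n)=\emptyset$. The only difference is that you spell out the justification of \eqref{idNexact} and the chaining of equivalences in slightly more detail than the paper does.
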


\begin{remark} The assertion in the counterexample \ref{CExp} can be reproved easily making use of $  N_{Exact}((f_n)_n)$. Indeed, since $N_{Exact}((\varphi_n)_n)=\{u\}$ and $N_{Exact}((\psi_n)_n)=\{v\}$, we get and  
	  $N_{Exact}((\varphi_n\eu + \psi_n\es)_n) =\emptyset$ by means of \eqref{idNexact}, and therefore $\varphi_n\eu + \psi_n\es$ is an exact bc-frame for $L^2(\BC)$.
  \end{remark}
   
 Accordingly, one proves that the well-known fact that "a frame for $L^2(\R^d)$ is exact if and only if it is a Riesz basis (see e.g. \cite[Theorem 7.1.1, p. 166]{Christensen2016})" is no longer valid for bc-frames. However, we assert the following
 
  \begin{prop}\label{PropRiesz}
  	 If $(f_n)_{n}$ is a bc-Riesz basis for $L^2(\BC)$, then $(f_n)_{n}$ is exact bc-frame for $L^2(\BC)$. 
  	   \end{prop}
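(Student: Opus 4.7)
The plan is to reduce the assertion to the corresponding classical statement on each idempotent slice and then assemble the pieces using Theorem \ref{thmchar} together with the identity \eqref{idNexact}. Suppose $(f_n)_n$, with $f_n = f_n^+\eu + f_n^-\es$, is a bc-Riesz basis for $L^2(\BC)$. The first step is to invoke the characterization of bc-Riesz bases in terms of their idempotent components (the proposition stated just above) to deduce that both $(f_n^+)_n$ and $(f_n^-)_n$ are classical Riesz bases for $L^2(\C^2)$.

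Next I would appeal to the well-known classical fact that every Riesz basis in a separable complex Hilbert space is an exact frame (see e.g.\ \cite[Theorem 7.1.1, p.~166]{Christensen2016}, which is precisely the reference the paper cites just before the statement). Applied to each idempotent component, this yields that $(f_n^+)_n$ and $(f_n^-)_n$ are classical exact frames for $L^2(\C^2)$. In particular they are frames, so Theorem \ref{thmchar} immediately implies that $(f_n)_n$ is a bc-frame for $L^2(\BC)$.

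To conclude, I would verify exactness via the index-set identity. Classical exactness of each component is equivalent to $N_{Exact}((f_n^\pm)_n) = \emptyset$, so \eqref{idNexact} gives
$$ N_{Exact}((f_n)_n) \;=\; N_{Exact}((f_n^+)_n) \cap N_{Exact}((f_n^-)_n) \;=\; \emptyset, $$
which is exactly the criterion for $(f_n)_n$ to be an exact bc-frame. There is no genuine obstacle here: once Theorem \ref{thmchar} and \eqref{idNexact} are in hand, the proof is essentially bookkeeping. The only subtlety worth flagging is the asymmetry already observed in Counterexample \ref{CExp}, namely that bc-exactness does not force classical exactness of either component; this is precisely why the implication stated in the proposition runs only in the direction \emph{Riesz $\Rightarrow$ exact}, and why the classical equivalence between Riesz bases and exact frames breaks down in the bicomplex setting.
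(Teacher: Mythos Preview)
Your proof is correct and follows essentially the same route as the paper: decompose the bc-Riesz basis into its idempotent components via Proposition~\ref{PropRiesz} (the characterization version), apply the classical equivalence between Riesz bases and exact frames from \cite[Theorem 7.1.1]{Christensen2016} on each slice, and then reassemble. The only difference is cosmetic: the paper's proof cites Theorem~\ref{thmchar} for the final step, whereas you spell out the exactness conclusion explicitly via the identity \eqref{idNexact} (which is arguably cleaner, since Theorem~\ref{thmchar} alone only gives the bc-frame property, not exactness; your use of \eqref{idNexact}, or equivalently Proposition~\ref{Propexact}, makes that step transparent).
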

   
\begin{proof} This is immediate making use of the fact that  $(f_n)_{n}$ is a Riesz basis for $L^2(\BC)$ if and only if the component sequences $(f_n^\pm)_{n}$ are Riesz bases for $L^2(\C^2)$ combined with that the exactness of ordinary frames for $L^2(\R^d)$ is equivalent to being Riesz bases \cite[Theorem 7.1.1, p. 166]{Christensen2016} and Theorem \ref{thmchar}. 
	  \end{proof} 

For complex Hilbert spaces all bounded unconditional bases are equivalent
to an orthonormal basis. More precisely, if $(\psi_n)_n$ is a bounded unconditional basis for $\mathcal{H}$, then there exists an orthonormal basis $(e_n)_n$ and a bounded invertible operator $U :\mathcal{H} \longrightarrow \mathcal{H}$ such that $\psi_n =
Ue_n$ for each $n$. 
This result remains valid for bc-frames thanks to Proposition \ref{Propbub}. Moreover, there is a close relation between exact frames and bounded
unconditional bases.

\begin{theorem}[\cite{HeilWalnut1989,Young1980}]\label{exactBEUnc}	 
	A frame $(f_n)_n$ for a complex Hilbert space $\mathcal{H}$ is exact
	if and only if it is a bounded unconditional basis.
\end{theorem}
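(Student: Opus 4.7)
The plan is to prove both implications separately using the standard machinery of frame operators and biorthogonality, since this is the classical route taken in the references cited.

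For the direction \emph{exact frame $\Rightarrow$ bounded unconditional basis}, I would first invoke the frame operator $S : \mathcal{H} \to \mathcal{H}$ defined by $Sf = \sum_n \langle f, f_n \rangle f_n$, which under the frame hypothesis is bounded, positive and boundedly invertible with $A\,\mathrm{Id} \le S \le B\,\mathrm{Id}$. The canonical dual family $\tilde{f}_n := S^{-1} f_n$ gives the expansion $f = \sum_n \langle f, \tilde{f}_n \rangle f_n$ with square-summable coefficients. The critical step is to show that exactness forces biorthogonality $\langle f_k, \tilde{f}_n \rangle = \delta_{k,n}$, which then yields uniqueness of the expansion (hence the basis property). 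The argument goes by contraposition: if biorthogonality fails, a rank-one correction to $S$ produces a frame operator for $(f_n)_{n\ne k}$, contradicting exactness. Unconditional convergence is then immediate from the $\ell^2$-summability of $(\langle f, \tilde{f}_n \rangle)_n$ and the reconstruction formula, since reordering does not affect the sum. Boundedness of the basis, namely $0 < \inf_n \|f_n\| \le \sup_n \|f_n\| < \infty$, follows from the upper frame inequality (for the sup) and, for the inf, from the observation $1 = \langle f_n, \tilde{f}_n\rangle \le \|f_n\| \|\tilde{f}_n\| \le \|f_n\| \sqrt{B}/A$.

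For the converse direction \emph{bounded unconditional basis $\Rightarrow$ exact frame}, I would use the classical fact (recalled in the paragraph preceding the theorem) that every bounded unconditional basis $(f_n)_n$ in $\mathcal{H}$ is of the form $f_n = Ue_n$ for some orthonormal basis $(e_n)_n$ and some bounded invertible $U$. From this, for every $f \in \mathcal{H}$,
\[
\sum_n |\langle f, f_n\rangle|^2 = \sum_n |\langle U^* f, e_n\rangle|^2 = \|U^* f\|^2,
\]
so that $(f_n)_n$ is a frame with bounds $\|U^{-1}\|^{-2}$ and $\|U\|^2$. Exactness follows because removing any $f_k = Ue_k$ leaves $(Ue_n)_{n\ne k}$, whose preimage $(e_n)_{n \ne k}$ under the invertible $U^{-1}$ is not complete in $\mathcal{H}$, ruling out the lower frame inequality.

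The main obstacle is the forward direction, specifically the biorthogonality step: this is precisely where the exactness hypothesis is genuinely used, and it requires the delicate rank-one perturbation of the frame operator to derive a contradiction. Once biorthogonality is in hand, the remaining properties (basis, boundedness, unconditionality) follow rather mechanically from the frame inequalities and the dual frame expansion.
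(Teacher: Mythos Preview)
The paper does not supply its own proof of this theorem: it is stated with a citation to \cite{HeilWalnut1989,Young1980} and then used as a black box in the proof of Proposition~\ref{Propbubexact}. So there is no ``paper's proof'' to compare your attempt against.

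That said, your sketch is the standard route taken in those references and is essentially correct. A couple of minor sharpenings would make the forward direction airtight. First, the biorthogonality step is usually phrased not via a ``rank-one perturbation'' but via the inequality $a_k:=\langle f_k,S^{-1}f_k\rangle\in[0,1]$ (positivity of $S^{-1}$ and the $\ell^2$-minimality of the canonical dual coefficients): if $a_k<1$ one solves $(1-a_k)f_k=\sum_{n\ne k}\langle f_k,S^{-1}f_n\rangle f_n$ and a Cauchy--Schwarz estimate shows $(f_n)_{n\ne k}$ still satisfies a lower frame bound, contradicting exactness; hence $a_k=1$, which combined with minimality forces $\langle f_k,S^{-1}f_n\rangle=\delta_{kn}$. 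Second, once biorthogonality is in hand you should also argue \emph{uniqueness} of the expansion coefficients (not just existence): if $f=\sum_n c_n f_n$ with $(c_n)\in\ell^2$, pairing against $\tilde f_m$ gives $c_m=\langle f,\tilde f_m\rangle$, so the representation is unique and $(f_n)$ is a Schauder basis. Your boundedness and unconditionality arguments, and the entire converse via $f_n=Ue_n$, are fine as written.
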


For bc-frames, we assert the following 

\begin{prop}\label{Propbubexact}  If $(f_n)_n$ is a bicomplex bounded unconditional basis for $\mathcal{H}_{bc}$, then $(f_n)_n$ is exact for $\mathcal{H}_{bc}$.
\end{prop}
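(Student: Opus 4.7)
The plan is to lift the classical Heil--Walnut--Young characterization (Theorem \ref{exactBEUnc}) to the bicomplex setting through the idempotent decomposition, by chaining Proposition \ref{Propbub} with the one-sided sufficient condition provided by Proposition \ref{Propexact}.

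First I would write $f_n=f_n^+\eu+f_n^-\es$ and invoke Proposition \ref{Propbub}: the hypothesis that $(f_n)_n$ is a bc-bounded unconditional basis forces at least one idempotent component, say $(f_n^+)_n$, to be a bounded unconditional basis for the complex Hilbert space $\mathcal{H}^+=\mathcal{H}_{bc}\eu$. Since a bounded unconditional basis in a complex Hilbert space is automatically a Riesz basis, and hence a frame, Theorem \ref{exactBEUnc} applies inside $\mathcal{H}^+$ and promotes $(f_n^+)_n$ to an \emph{exact} frame for $\mathcal{H}^+$. Once $(f_n)_n$ is known to be a bc-frame, Proposition \ref{Propexact} then converts this one-sided exactness into the desired conclusion that $(f_n)_n$ is an exact bc-frame for $\mathcal{H}_{bc}$.

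The hard part will be verifying the bc-frame status of $(f_n)_n$ itself, which is tacitly required to invoke Proposition \ref{Propexact}. Proposition \ref{Propbub} only guarantees that one idempotent component is bounded unconditional; the other is a priori merely an unconditional basis with upper-bounded norms, which in a Hilbert space need not satisfy a lower frame bound if the norms can cluster at zero. To close this gap I would appeal to the K\"othe--Lorch style structure theorem---every unconditional basis in a Hilbert space is equivalent, through a bounded invertible operator, to an orthogonal basis---and combine it with the bicomplex boundedness assumption on $(f_n)_n$ to force the other component $(f_n^-)_n$ to be a Riesz basis as well. Theorem \ref{thmchar} then certifies $(f_n)_n$ as a bc-frame, making the final appeal to Proposition \ref{Propexact} legitimate and concluding the argument.
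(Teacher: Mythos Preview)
Your chain of implications is exactly the paper's own one-line argument: Proposition~\ref{Propbub} $\Rightarrow$ one component is a bounded unconditional basis $\Rightarrow$ Theorem~\ref{exactBEUnc} makes that component an exact frame $\Rightarrow$ Proposition~\ref{Propexact} lifts exactness back to $(f_n)_n$. So strategically you and the paper agree.

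You go further than the paper by flagging a genuine issue that the paper's proof glosses over: Proposition~\ref{Propexact} has the standing hypothesis that $(f_n)_n$ is already a bc-frame, and nothing in the chain above establishes this. Your attempted repair, however, does not close the gap. The bc-boundedness condition $\inf_n |\scal{f_n,f_n}_{bc}|>0$ is a \emph{max-type} lower bound on the pair $(\norm{f_n^+},\norm{f_n^-})$: since $|\scal{f_n,f_n}_{bc}|^2=\tfrac12(\norm{f_n^+}^4+\norm{f_n^-}^4)$, one component can have $\norm{f_n^-}\to 0$ while the other stays bounded below, and the bc-lower bound survives. Concretely, with $(e_n)_n$ an orthonormal basis of $\mathcal{H}^-$, the sequence $f_n=e_n\eu+\tfrac{1}{n}e_n\es$ is a bc-bounded unconditional basis, yet $(f_n^-)_n=(e_n/n)_n$ is an unconditional basis that is \emph{not} a frame (no lower frame inequality). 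By Theorem~\ref{thmchar}, $(f_n)_n$ is then not a bc-frame at all, so the phrase ``exact bc-frame'' is vacuous for it. The K\"othe--Lorch structure theorem you invoke only gives equivalence to an \emph{orthogonal} (not orthonormal) basis and cannot manufacture the missing lower bound.

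In short: your route matches the paper's, you correctly spot a hidden hypothesis that the paper's proof also leaves unverified, but your proposed patch does not work, and the example above shows the gap is real rather than cosmetic.
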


\begin{proof}
	This follows from Theorem \ref{exactBEUnc} combined with Propositions \ref{Propbub} and \ref{Propexact}.
\end{proof}

\section{bc-frame operator and Weyl--Heisenberg bc-frames}
\subsection{bc-frame operator.}

Let $(\mathcal{H}_{bc}, \scal{,}_{bc})$ be a functional bc-Hilbert space on $\BC$  endowed with the bc-scaler product 
$$ \scal{f,g}_{bc} =\int_{\BC} f(Z) [g(Z)]^* d\lambda(Z).$$
Thus, we decompose $\mathcal{H}_{bc}$ idempotentically as $\mathcal{H}_{bc}= \mathcal{H}^+\eu +\mathcal{H}^-\es$ where $\mathcal{H}^\pm$ are $\C$-Hilbert spaces on $\C$. For given bc-bicomplex frame $(f_n)_n$ for $\mathcal{H}_{bc}$, the components $(f^\pm_n)_n$ are ordinary frames for $\mathcal{H}^\pm$.
Therefore, one may define the analysis operators $T_\pm : \mathcal{H}^{\pm} \longrightarrow \ell^2_{\C}$ and their adjoint $T_\pm^{adj} : \ell^2_{\C} \longrightarrow \mathcal{H}^{\pm}$ by 
$$ T_\pm f^\pm = \left( \scal{f^\pm,f^\pm_n}\right)_n \quad \mbox{and} \quad T_\pm^{adj}((c_n)_n) = \sum_{n=0}^\infty c_n f^\pm_n.$$ 
We can define the bc-analysis operator $ T :  \mathcal{H}_{bc} \longrightarrow \ell^2_{\BC}$ for the bicomplex Hilbert space $\mathcal{H}_{bc}$ by 
$$ Tf := \left( T_+\eu + T_-\eu  \right) f =  T_+f^+\eu + T_-f^-\eu  = (\scal{f,f_n}_{bc})_n $$
with $f=f^+ \eu + f^- \eu \in \mathcal{H}_{bc}$. 
Its adjoint  $ T^{bc-adj} :  \ell_{\BC}  \longrightarrow \mathcal{H}_{bc}$ with respect to bicomplex hilbertian structure is shown to be given by
$$ T^{bc-adj} f 
=  T_+^{adj}f^+\eu + T_-^{adj}f^-\eu .$$ 
 Therefore, we define the bc-frame operator $S: \mathcal{H}_{bc} \longrightarrow \mathcal{H}_{bc}$ to be 
$$ S f = \left( T_+\eu + T_-\eu  \right)^{bc-adj} \left(  T_+\eu + T_-\eu  \right)= S_+ f^+ \eu + S_- f^- \es $$
with $S_\pm=T_\pm^{adj}T_\pm$ are the classical frame operators associated to $(f^\pm_n)_n$ for $\mathcal{H}^{\pm}$. By construction, the operator $S$ inherits from $S_{\pm}$ their basic properties. Notice for instance that we have following   
$$ Sf =\sum_{n=0}^\infty \scal{f,f_n}_{bc} f_n ,$$
$$ \scal{Sf,f}_{bc}= \sum_{n=0}^\infty |\scal{f,f_n}_{bc}|^2 $$
and 
$$ f =\sum_{n=0}^\infty \scal{f,S^{-1}f_n}_{bc} f_n .$$
Moreover, $S$ is clearly invertible, self-adjoint $\scal{Sf,g}_{bc}=\scal{f,Sg}_{bc}$ and bounded operator. Its norm satisfies the following estimation
 $$\norm{S}^2_{op} \leq \max\left( \norm{S_+}^2_{op}, \norm{S_-}^2_{op}\right)  = \max\left( {b^+}^2_{opt}, {b^-}_{opt}\right) .$$ 
Moreover, $S$ is hyperbolic positive in the sense that $\scal{Sf,f}_{bc} \in \mathbb{D}^+$ for every $f\in \mathcal{H}_{bc}$, where $\mathbb{D}^+= \R^+ \eu + \R^+\eu$ denotes the set of positive hyperbolic numbers $\mathbb{D}^+= \R^+ \eu + \R^+\eu$. 

Although, the operator frame is naturally extended to the bicomplex context, we will be careful when examining their properties. This is closely connected to materials discussed in the previous section. In fact, from Proposition \ref{Proptight}, we know that tightness of a bc-frame $(f_n)_n$ for $\mathcal{H}_{bc}$ implies the one of the frames $(f^\pm_n)_n$ for $\mathcal{H}^{\pm}$. Thus, by means of \cite[Proposition 5.1.1., p. 86]{Gr\"ochenig2001}, we have $S_\pm = a^\pm_{opt} Id_{\mathcal{H}^{\pm}}$; $a^\pm_{opt}=b^\pm_{opt} \in \R^+$, and therefore 
$$ S   = a^+_{opt} Id_{H^+}\eu + a^-_{opt} Id_{H^-}\eu = \left(  a^+_{opt} \eu + a^-_{opt}\eu \right)  Id_{\mathcal{H}_{bc}}.$$
This proves the following assertion  

\begin{prop}
	If $(f_n)_n$ is a tight bc-frame for $\mathcal{H}_{bc}$, then $S= d Id_{\mathcal{H}_{bc}}$ for certain positive hyperbolic number $d\in \mathbb{D}^+$. 	
\end{prop}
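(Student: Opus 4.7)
The plan is to reduce the statement to a component-wise assertion about the two idempotent parts of the frame operator and then invoke the well-known classical fact that the frame operator of a tight frame is a scalar multiple of the identity. This directly mirrors the discussion sitting just above the statement, so the proof is essentially a clean assembly of two inputs already recorded in the paper.

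First I would apply Proposition \ref{Proptight}: tightness of $(f_n)_n$ as a bc-frame for $\mathcal{H}_{bc}$ forces each idempotent component $(f_n^\pm)_n$ to be a tight frame for $\mathcal{H}^\pm$, with best bounds $a^\pm_{opt}=b^\pm_{opt} > 0$. Next I would invoke the standard identity \cite[Proposition 5.1.1]{Gr\"ochenig2001}, which says that the frame operator of a tight frame is a scalar multiple of the identity, giving $S_\pm = a^\pm_{opt}\, Id_{\mathcal{H}^\pm}$. Plugging these equalities into the decomposition $Sf = S_+ f^+ \eu + S_- f^- \es$ derived earlier in this section yields
\[
S f = a^+_{opt}\, f^+ \eu + a^-_{opt}\, f^- \es = \bigl( a^+_{opt}\, \eu + a^-_{opt}\, \es \bigr)\, f
\]
for every $f = f^+\eu + f^-\es \in \mathcal{H}_{bc}$. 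Hence $S = d\cdot Id_{\mathcal{H}_{bc}}$ with $d := a^+_{opt}\, \eu + a^-_{opt}\, \es$, and the strict positivity of $a^\pm_{opt}$ places $d$ inside $\mathbb{D}^+$.

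I do not expect any substantive obstacle: the content-laden steps (the transfer of tightness to the components, and Gr\"ochenig's scalar-multiple identity in the complex setting) are already in place, and the remainder is purely bookkeeping along the $\eu/\es$ decomposition. The only small point worth an explicit line is verifying $d\in \mathbb{D}^+$, which is immediate from the positivity of the lower frame bounds $a^\pm_{opt}$ of the component frames $(f_n^\pm)_n$.
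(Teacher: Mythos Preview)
Your proposal is correct and follows essentially the same argument as the paper: apply Proposition~\ref{Proptight} to transfer tightness to the components, invoke \cite[Proposition~5.1.1]{Gr\"ochenig2001} to get $S_\pm = a^\pm_{opt}\,Id_{\mathcal{H}^\pm}$, and then reassemble via the idempotent decomposition of $S$. The paper records exactly this reasoning in the paragraph immediately preceding the proposition, so your write-up matches it step for step.
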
	

\begin{remark}
	The converse in not valid in general unless we assume that $d\in\R^+$ (i.e., $a^+_{opt} = a^-_{opt}$). Thus for complex-valued bc-frame, we recover the classical result characterizing tight frames as the operator $d Id_{\mathcal{H}_{bc}}$.
\end{remark}

\subsection{Weyl-Heisenberg bc-frames.}

The so-called Weyl--Heisenberg (or Gabor) frames are the famous frames for $L^2(\R)$ that can be generated from a single element (called mother wavelet). More exactly, they are frames of functions of the form
$$ \mathcal{G}(a,b,g) := \{W_{na,mb} (g) (t) = e^{ i m bt} g(t - na)  ; \, m, n \in \Z\},$$
 where $a, b > 0$ and $g \in L^2(\R)$ are fixed, and $W_{a,b} $ denotes the Weyl operator 
$$W_{a,b} (g) (t) :=  e^{ ibt} g(t - a) = M_{b} T_{a} g(t) .$$.  
The frameness of Weyl--Heisenberg systems $\mathcal{G}(a,b,g)$ in $L^2_{\C}(\R)$ has been
extensively discussed in several papers. 
See for instance \cite{Daubechies1990,HeilWalnut1989,Christensen2016,Deng1997,Gr\"ochenig2001}.
The next result is an example of assertions providing us with sufficient conditions on $g$ and the lattice parameters $a$ and $b$ to $\mathcal{G}(a,b,g) $ be a frame. Namely,

\begin{theorem}[\cite{HeilWalnut1989}] \label{WHframe}	
	Let $g$ be a compactly supported function with support contained in some interval $I$ of length $1/b$ and such that 
	$$ \alpha \leq \sum_{n=0}^\infty |T_{na} (g) (t)|^2 \leq \beta$$
	almost everywhere on $\R$ for some constants $\alpha,\beta>0$. Then, 
	the Weyl--Heisenberg system $\mathcal{G}(a,b,g) $ is a frame for $L^2(\R)$ with frame bounds
	$\alpha/b$ and $\beta/b$.
\end{theorem}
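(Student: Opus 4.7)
The plan is to apply the ``painless non-orthogonal expansions'' idea of Daubechies--Grossmann--Meyer: the compact support of $g$ in an interval $I$ of length $1/b$ makes each inner product $\scal{f, W_{na,mb}g}$ look like a Fourier coefficient on the translate $I+na$, where the exponentials $\{e^{imbt}\}_{m\in\Z}$ (appropriately normalized) form an orthonormal basis of $L^2(I+na)$. Parseval then collapses the $m$-sum to a single integral, and the hypothesis on $\sum_n|T_{na}g|^2$ handles the $n$-sum.

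First, for fixed $f\in L^2(\R)$ and $n\in\Z$, I would observe that $F_n(t):=f(t)\overline{g(t-na)}$ is supported in $I+na$, again an interval of length $1/b$. Therefore
$$\scal{f, W_{na,mb}g} = \int_{I+na} F_n(t)\, e^{-imbt}\, dt$$
is (up to a fixed normalization) the $m$-th Fourier coefficient of $F_n$ in $L^2(I+na)$. Parseval's identity on this interval gives
$$\sum_{m\in\Z}|\scal{f, W_{na,mb}g}|^2 = \frac{1}{b}\int_{I+na}|f(t)|^2|g(t-na)|^2\, dt.$$

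Next, I would sum over $n\in\Z$ and interchange sum and integral by Tonelli (the integrand is non-negative), obtaining
$$\sum_{m,n\in\Z}|\scal{f, W_{na,mb}g}|^2 = \frac{1}{b}\int_{\R}|f(t)|^2\Big(\sum_{n\in\Z}|T_{na}g(t)|^2\Big)\,dt.$$
Inserting the two-sided bound $\alpha\leq\sum_n|T_{na}g(t)|^2\leq\beta$ a.e. immediately yields $(\alpha/b)\norm{f}^2\leq\sum_{m,n}|\scal{f,W_{na,mb}g}|^2\leq(\beta/b)\norm{f}^2$, which is the asserted frame inequality with bounds $\alpha/b$ and $\beta/b$.

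The only real subtlety is pinning down the exponential convention so that $\{e^{imbt}\}_m$ is indeed an orthogonal basis on an interval of length exactly $1/b$, and so that the normalizing factor in Parseval comes out to $1/b$ as demanded by the stated frame bounds; this is pure bookkeeping. Apart from that, the argument has no geometric obstacle: the compact support hypothesis guarantees that for each $n$ the function $F_n$ lives on a single length-$1/b$ interval, so no partition-of-unity, tiling, or splitting argument is required, and the a.e. two-sided control of $\sum_n|T_{na}g(t)|^2$ plugs in directly once the $m$-sum has been eliminated.
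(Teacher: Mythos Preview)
The paper does not supply its own proof of this theorem; it is quoted verbatim as a result of Heil--Walnut and used only as an input to the subsequent bicomplex discussion. Your argument is exactly the standard ``painless non-orthogonal expansions'' computation that proves this result, and it is correct: the compact support collapses each $m$-sum via Parseval on a length-$1/b$ interval, and the pointwise hypothesis on $\sum_n|T_{na}g|^2$ then controls the remaining integral. The only wrinkle---which you already identify---is that with the paper's convention $M_bf(t)=e^{ibt}f(t)$ the exponentials $\{e^{imbt}\}_m$ are orthogonal on intervals of length $2\pi/b$, not $1/b$; the stated bounds $\alpha/b$, $\beta/b$ presuppose the $2\pi$-normalized convention $M_bf(t)=e^{2\pi ibt}f(t)$ used in the cited source, so this is a discrepancy in the paper's statement rather than a defect in your proof.
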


The introduction of bicomplex analogue of Weyl--Heisenberg (W-H) systems  can be accomplished in different ways. In the sequel, we confine our attention to the natural ones. By considering two classical W-H systems $\mathcal{G}(a,b,g)$ and $\mathcal{G}(c,d,h)$ for  $L^2(\R)$, we perform the following 
$$  \mathcal{G}^{\nu,\mu}(\Gamma_{hyp}(A,B), f)  := \{W^{\nu,\mu}_{nA,mB} f  ; m,n\in \Z\}
$$
 associated to  $f=g\eu+h\es$, the hyperbolic lattice $\Gamma_{hyp}(A,B)= \Z A + \Z B$; $A=a\eu+c\es, B=b\eu+d\es\in \D^+$ and the modified Weyl operator \cite{ElGhZ2020}
$$ W^{\nu,\mu}_{nA,mB} f (t) :=  
[W^\nu_{na,mb} g] (t) \eu + [W^\mu_{nc,md} h] (t) $$
defined as  as projective representation of two copies of  $ W^\nu_{a,b} g(t) = e^{\nu  bt}g(t-a)$. Here $\nu^2=\mu^2=-1$. 
By taking $a=b$, $c=d$ and $g=h$ we recover the classical notion of W-H system.

\begin{defn}
	We call $\mathcal{G}^{\nu,\mu}(\Gamma_{hyp}(A,B), f)$ a bicomplex W-H system for the Hilbert space $L^2_{\BC}(\R)$.
	\end{defn}

Therefore, by means of Theorems \ref{thmchar} and \ref{WHframe}, we deduce easily the following result for W-H  bc-systems in the bicomplex Hilbert space of bicomplex-valued functions $f$ on $\R$ such that $\norm{f}_{bc}<+infty$.

\begin{prop}  $\mathcal{G}^{\nu,\mu}(\Gamma_{hyp}(A,B), f)$ generates a bc-frame for $L^2_{\BC}(\R)$ under the assumption that $f$ are compactly supported with support contained in some interval $I$ of length $\min(1/b,1/d)$ and 
$$ \alpha' \leq \sum_{n=0}^\infty |(T_{na} \eu + T_{nc} \es) (f) (t)|^2_{bc} \leq \beta'$$
almost everywhere on $\R$ for certain constants $\alpha'$ and $\beta'$.
\end{prop}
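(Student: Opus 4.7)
The plan is to reduce the proposition to two applications of the classical Theorem~\ref{WHframe} --- one for each idempotent component of $f$ --- and then to reassemble the resulting ordinary frames into a bc-frame via Theorem~\ref{thmchar}. To start, I would use the idempotent decomposition $f = g\eu + h\es$ with $g,h\in L^2(\R)$, together with the relations $\eu^2=\eu$, $\es^2=\es$ and $\eu\es=0$, to verify that
\begin{align*}
W^{\nu,\mu}_{nA,mB} f (t) = \bigl[W^{\nu}_{na,mb} g\bigr](t)\,\eu + \bigl[W^{\mu}_{nc,md} h\bigr](t)\,\es.
\end{align*}
Consequently the idempotent components of the bicomplex W--H system $\mathcal{G}^{\nu,\mu}(\Gamma_{hyp}(A,B),f)$ are exactly the classical W--H systems $\mathcal{G}(a,b,g)$ and $\mathcal{G}(c,d,h)$ for $L^2(\R)$. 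Theorem~\ref{thmchar} (in its extension to general bc-Hilbert spaces, with $L^2_{\BC}(\R)=L^2(\R)\eu + L^2(\R)\es$) then reduces the bc-frame property to the simultaneous frameness of $\mathcal{G}(a,b,g)$ and $\mathcal{G}(c,d,h)$ in $L^2(\R)$.

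Next I would translate the two hypotheses into the setting of Theorem~\ref{WHframe} applied separately to $g$ and $h$. The support hypothesis passes down directly, since an interval $I$ of length $\min(1/b,1/d)$ has length at most $1/b$ and at most $1/d$, so $g$ (resp.\ $h$) automatically meets the support requirement of Theorem~\ref{WHframe}. For the pointwise sum condition, I would use
\begin{align*}
\bigl|(T_{na}\eu + T_{nc}\es)(f)(t)\bigr|^{2}_{bc} = \tfrac{1}{2}\bigl(|T_{na}g(t)|^{2} + |T_{nc}h(t)|^{2}\bigr),
\end{align*}
which follows from the modulus formula \eqref{euclideannorm} applied to $(T_{na}g)\eu + (T_{nc}h)\es$. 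The bicomplex hypothesis then becomes the two-sided almost-everywhere bound $2\alpha' \leq \sum_{n}|T_{na}g(t)|^{2}+\sum_{n}|T_{nc}h(t)|^{2}\leq 2\beta'$ on $\R$.

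The non-negativity of the summands at once yields the upper bounds $\sum_{n}|T_{na}g(t)|^{2}\leq 2\beta'$ and $\sum_{n}|T_{nc}h(t)|^{2}\leq 2\beta'$, which is all Theorem~\ref{WHframe} requires on the upper side. The main obstacle, and the place where the stated hypothesis has to be interpreted carefully, is the extraction of strictly positive lower bounds for each individual series, since a positive lower bound on a sum does not in general pass to its summands. The natural reading is that the two classical sums admit separate pointwise positive lower bounds $\alpha_{g}, \alpha_{h}>0$ with $\alpha_{g}+\alpha_{h}\geq 2\alpha'$; under this reinforcement, Theorem~\ref{WHframe} produces frames $\mathcal{G}(a,b,g)$ and $\mathcal{G}(c,d,h)$ with frame bounds $(\alpha_{g}/b,\,2\beta'/b)$ and $(\alpha_{h}/d,\,2\beta'/d)$ respectively, and a final invocation of Theorem~\ref{thmchar} assembles these into a bc-frame for $L^2_{\BC}(\R)$ with bounds $A=\min\{\alpha_{g}/b,\alpha_{h}/d\}$ and $B=\max\{2\beta'/b,2\beta'/d\}$.
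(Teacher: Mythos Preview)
Your approach is exactly the one the paper intends: the proposition is stated immediately after Theorem~\ref{WHframe} with the sentence ``by means of Theorems~\ref{thmchar} and~\ref{WHframe}, we deduce easily the following result,'' and no further proof is given. Your decomposition of $W^{\nu,\mu}_{nA,mB}f$ into its idempotent components, the identification of these components with the classical W--H systems $\mathcal{G}(a,b,g)$ and $\mathcal{G}(c,d,h)$, and the reassembly via Theorem~\ref{thmchar} is precisely the argument the paper has in mind.

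You have, moreover, been more careful than the paper. Your observation that the pointwise lower bound
\[
2\alpha' \leq \sum_{n}|T_{na}g(t)|^{2}+\sum_{n}|T_{nc}h(t)|^{2}
\]
does not by itself force \emph{each} of the two nonnegative sums to be bounded away from zero is entirely correct, and the paper does not address this. Your proposed resolution --- reading the hypothesis as implicitly requiring separate positive lower bounds $\alpha_g,\alpha_h$ for the two component sums --- is the natural fix and is consistent with how the paper treats every other bc-notion (always via the two idempotent pieces). Without that reinforcement the stated hypothesis is genuinely too weak to invoke Theorem~\ref{WHframe} on each component, so you have identified a small imprecision in the proposition as written rather than a flaw in your own argument.
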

	
	Thus, the existence of  W-H bc-frames for $L^2_{\BC}(\R)$ requires in particular $ab\leq 1$ or $cd\leq 1$ by Theorem \ref{thmchar} and \cite[Corollary 7.5.1., p. 138]{Gr\"ochenig2001}. Moreover, the most properties valid for complex W-H frames can be established easily for the W-H bc-frames. 
Notice then even the critical case $ab=1$ characterize the exact W-H frame for $L^2(\R)$, the exactness  of  $\mathcal{G}^{\nu,\mu}(\Gamma_{hyp}(A,B), f)$ is not characterized by  
$AB=1$ in $\D^+$, i.e., $ab=1$ and $cd=1$. Namely, the following assertion readily follows using \cite[Corollary 7.5.2. p. 139]{Gr\"ochenig2001} and Proposition \ref{Propexact}

\begin{prop}\label{propWHab}
If $\mathcal{G}^{\nu,\mu}(\Gamma_{hyp}(A,B),f)$ is a Weyl--Heisenberg bc-frame for $L^2_{\BC}(\R)$ and $ab=1$ or $cd=1$, then it is exact. 
\end{prop}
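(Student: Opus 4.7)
The plan is to chain together Theorem \ref{thmchar}, a classical critical-density result for Gabor frames, and Proposition \ref{Propexact}. The bicomplex bookkeeping is already built into the definition of $\mathcal{G}^{\nu,\mu}(\Gamma_{hyp}(A,B),f)$, since by construction its idempotent components are precisely the two classical Weyl--Heisenberg systems $\mathcal{G}(a,b,g)$ and $\mathcal{G}(c,d,h)$ in $L^2(\R)$, indexed by the same pair $(m,n)\in\Z^2$. This is what makes the reduction clean.

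First I would invoke Theorem \ref{thmchar} (in the version stated in the remark following it, for general bc-Hilbert spaces such as $L^2_{\BC}(\R)$): the hypothesis that $\mathcal{G}^{\nu,\mu}(\Gamma_{hyp}(A,B),f)$ is a bc-frame is equivalent to both components $\mathcal{G}(a,b,g)$ and $\mathcal{G}(c,d,h)$ being (ordinary) frames for $L^2(\R)$. Second, assuming without loss of generality $ab=1$ (the case $cd=1$ is symmetric via the $\es$-component), I would appeal to \cite[Corollary 7.5.2., p. 139]{Gr\"ochenig2001}: at the critical density $ab=1$, a Weyl--Heisenberg frame in $L^2(\R)$ is automatically exact. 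So $\mathcal{G}(a,b,g)$ is an exact frame for $L^2(\R)$.

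Third, I would close by applying Proposition \ref{Propexact}, which asserts that a bc-frame is exact as soon as at least one of its idempotent components is exact in the classical sense. Since $(f_n^+)_n = \mathcal{G}(a,b,g)$ is exact, this yields at once that $\mathcal{G}^{\nu,\mu}(\Gamma_{hyp}(A,B),f)$ is an exact bc-frame for $L^2_{\BC}(\R)$.

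There is essentially no technical obstacle: the main work has already been absorbed into Theorem \ref{thmchar} and Proposition \ref{Propexact}, and the critical-density statement is classical. The only point requiring mild care is to verify that removing the element indexed by $(m_0,n_0)$ at the bicomplex level corresponds exactly to removing the elements at $(m_0,n_0)$ in each component simultaneously; this is immediate from the pointwise identity $W^{\nu,\mu}_{nA,mB}f = [W^\nu_{na,mb}g]\eu+[W^\mu_{nc,md}h]\es$, so the index-matching needed to feed Proposition \ref{Propexact} is automatic.
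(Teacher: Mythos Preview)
Your proposal is correct and follows essentially the same route as the paper: invoke the classical critical-density result \cite[Corollary 7.5.2., p.~139]{Gr\"ochenig2001} to get exactness of one idempotent component, then apply Proposition \ref{Propexact}. The only difference is that you make the role of Theorem \ref{thmchar} and the index-matching explicit, which is fine but not strictly needed once Proposition \ref{Propexact} is in hand.
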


\begin{remark}
	The converse of Proposition \ref{propWHab} is not true in general.
\end{remark}

The previous formalism for constructing W-H bc-frames for $L^2_{\BC}(\R)$ can be extended (in a similar way), thanks to Theorem \ref{thmchar}), to the Hilbert space $L^2_{\BC}(\D)$ on hyperbolic numbers by considering the family of functions
\begin{align*}
[W^\nu_{na,mb} \varphi ] (x\eu+y\es) \eu +  [W^\mu_{pc,qd} \psi] (x\eu+y\es) ,
\end{align*}
for varying 4-uplet $M=(m,n,p,q)\in \Z^4$. Here $a,b,c,d>0$ are fixed reals and $\varphi,\psi $ are fixed complex-valued functions in $L^2_{\C}(\R)$. 
The following result shows that it is not true in general that one can generate W-H bc-frames for  $L^2_{\BC}(\D)$  by means of theorem \ref{thmchar} and
 starting from given W-H frames for  $L^2_{\C}(\R)$. In fact, for fixed $g,h \in L^2_{\C}(\R)$,  we define 
\begin{align}\label{WHbcGen}
\psi_M^{\nu,\mu}(x\eu + y\es) := e^{-y^2/2}[W^\nu_{na,mb} g] (x) \eu + e^{-x^2/2} [W^\mu_{nc,md} h] (y).
\end{align} 
Thus, we can prove the following

\begin{prop} 
	Assume $\mathcal{G}(a,b,g)$ and $\mathcal{G}(c,d,h)$ are frames for $L^2_{\C}(\R)$. Then, the bicomplex W-H system $\psi_M^{\nu,\mu}$ in \eqref{WHbcGen} is not a bc-frame for 
	$L^2_{\BC}(\D)$.
\end{prop}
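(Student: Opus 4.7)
The plan is to invoke Theorem \ref{thmchar} (in its extension to arbitrary bicomplex Hilbert spaces mentioned in the Remark after its proof) and show that at least one of the idempotent components of $\psi_M^{\nu,\mu}$ fails to be a frame for the complex Hilbert space $L^2_{\C}(\D)\cong L^2(\R^2)$. If this is established, then $(\psi_M^{\nu,\mu})_M$ cannot be a bc-frame for $L^2_{\BC}(\D)$.

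First I would read off the two idempotent components from \eqref{WHbcGen}, namely
$$
\psi_M^+(x,y) = e^{-y^2/2}\,[W^\nu_{na,mb}g](x), \qquad
\psi_M^-(x,y) = e^{-x^2/2}\,[W^\mu_{nc,md}h](y).
$$
The decisive observation is structural: the $y$-profile of $\psi_M^+$ is the fixed Gaussian $e^{-y^2/2}$ and does not depend on the index $M=(m,n)$. Consequently the closed linear span of $(\psi_M^+)_M$ in $L^2(\R^2)$ is contained in $\C\,e^{-y^2/2}\otimes L^2(\R)$, which is a proper closed subspace of $L^2(\R^2)$ (its orthogonal complement contains, for instance, the entire subspace $\{e^{-y^2/2}\}^\perp\otimes L^2(\R)$ in the Gaussian direction).

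Next I would exhibit an explicit $f\in L^2(\R^2)$ producing $\sum_M|\langle f,\psi_M^+\rangle|^2=0$ while $\|f\|_{L^2(\R^2)}\neq 0$, thereby defeating the lower frame bound. Choosing $f(x,y)=F(x)G(y)$ with $F\in L^2(\R)\setminus\{0\}$ arbitrary and $G\in L^2(\R)\setminus\{0\}$ chosen orthogonal to $e^{-y^2/2}$ (e.g.\ $G(y)=y\,e^{-y^2/2}$, which is in $L^2(\R)$ and integrates against $e^{-y^2/2}$ to zero by parity), the inner product factors as
$$
\langle f,\psi_M^+\rangle = \Bigl(\int_{\R} F(x)\,\overline{[W^\nu_{na,mb}g](x)}\,dx\Bigr)\Bigl(\int_{\R} G(y)\,e^{-y^2/2}\,dy\Bigr) = 0
$$
for every $M$, while $\|f\|^2_{L^2(\R^2)}=\|F\|^2\|G\|^2>0$. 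Hence $(\psi_M^+)_M$ is not complete in $L^2(\R^2)$ and \emph{a fortiori} not a frame. Theorem \ref{thmchar} then forces $(\psi_M^{\nu,\mu})_M$ to fail to be a bc-frame for $L^2_{\BC}(\D)$, independently of the frame hypotheses on $\mathcal{G}(a,b,g)$ and $\mathcal{G}(c,d,h)$. An entirely symmetric argument applies to $\psi_M^-$ by swapping the roles of $x$ and $y$.

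No substantial obstacle is anticipated: the argument is essentially a completeness obstruction coming from the separation of variables hidden in \eqref{WHbcGen}. The only point requiring minor care is the correct identification of the idempotent components in the bicomplex Hilbert space $L^2_{\BC}(\D)$ and the legitimacy of applying Theorem \ref{thmchar} in this setting, both of which are guaranteed by the remark following that theorem.
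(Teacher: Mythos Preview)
Your proposal is correct and follows essentially the same route as the paper: both reduce via Theorem~\ref{thmchar} to showing that the $\eu$-component $e^{-y^2/2}[W^\nu_{na,mb}g](x)$ is not a frame for $L^2_{\C}(\D)$, and both exploit the tensor structure to exhibit a nonzero test function with $y$-part orthogonal to the Gaussian. The paper phrases this using the Hermite basis $h_u\otimes h_v$ and the resulting Kronecker delta $\delta_{v,0}$; your choice $G(y)=y\,e^{-y^2/2}$ is precisely (up to normalization) the first Hermite function $h_1$, so your argument is a streamlined instance of the same computation that avoids introducing the full basis.
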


	\begin{proof}
By means of Theorem \ref{thmchar} the framness of $\psi_M^{\nu,\mu}$, in $L^2_{\BC}(\D)$, is equivalent to the framness of their components $e^{-y^2/2}[W^\nu_{na,mb} g] (x)$ and $e^{-x^2/2} [W^\mu_{nc,md} h] (y)$ in $L^2_{\C}(\D)$. It should be noticed here that the tensor product $h_u\oplus h_v (x,y) = h_u(x)h_v(y)$ of Hermite functions is an orthogonal basis of 
$$L^2_{\C}(\D) =\left\{ \psi : \D \longrightarrow \C; \int_{\D} \psi(\xi) \overline{\psi(\xi)} d\lambda(\xi) \right\},$$ and that direct computation using Fubini theorem shows that we have 
\begin{align*}
\sum_{m\in\Z}\sum_{n\in\Z} \left|\scal{h_u\otimes h_v ,  W^\nu_{na,mb} g \otimes h_0 }_{L^2_{\C}(\D)} \right|^2
&= \pi \sum_{m\in\Z}\sum_{n\in\Z}  \left|\scal{ h_u  ,W^\nu_{na,mb} g}_{L^2_{\C}(\R)} \right|^2   \delta_{v,0} .
\end{align*}
This shows in particular that the considered sequence $W^\nu_{na,mb} g \otimes h_0$ is not a frame in $L^2_{\C}(\D)$. This completes the proof.
	\end{proof}

However, we have
\begin{prop} 
	If $(a,b,g)$ and $(c,d,h)$ generate Bessel sequence of W-H type in $L^2_{\C}(\R)$, then the family $\psi_M^{\nu,\mu}$ in \eqref{WHbcGen}, for varying $M\in\Z^4$, is a Bessel sequence in $L^2_{\BC}(\D)$.
\end{prop}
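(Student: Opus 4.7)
The plan is to exploit the idempotent decomposition of $L^2_{\BC}(\D)$ to reduce the bicomplex Bessel inequality to two classical Bessel inequalities, one for $\mathcal{G}(a,b,g)$ and one for $\mathcal{G}(c,d,h)$ in $L^2_{\C}(\R)$, applied to suitable Gaussian projections of the data. First I would write every $f\in L^2_{\BC}(\D)$ as $f=f^+\eu+f^-\es$ with $f^\pm\in L^2_{\C}(\D)$, and note that $\psi_M^{\nu,\mu}=\psi_M^+\eu+\psi_M^-\es$ where $\psi_M^+(x,y)=e^{-y^2/2}[W^\nu_{na,mb}g](x)$ and $\psi_M^-(x,y)=e^{-x^2/2}[W^\mu_{nc,md}h](y)$. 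By \eqref{bcsp} and \eqref{inducednorm},
$$|\scal{f,\psi_M^{\nu,\mu}}|^2_{bc}=\frac{1}{2}\bigl(|\scal{f^+,\psi_M^+}_{L^2_{\C}(\D)}|^2+|\scal{f^-,\psi_M^-}_{L^2_{\C}(\D)}|^2\bigr),$$
while $\norm{f}^2_{bc}=\frac{1}{2}(\norm{f^+}^2+\norm{f^-}^2)$ by \eqref{normL2}. It therefore suffices to show that each of the families $(\psi_M^\pm)_M$ is a Bessel sequence in $L^2_{\C}(\D)$, with bound controlled by that of the respective classical W--H family.

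For the $+$ component I would apply Fubini to recognise the two-dimensional inner product as a one-dimensional one against the given Weyl element,
$$\scal{f^+,\psi_M^+}_{L^2_{\C}(\D)}=\scal{F^+,W^\nu_{na,mb}g}_{L^2_{\C}(\R)},\qquad F^+(x):=\int_{\R}f^+(x,y)\,e^{-y^2/2}\,dy.$$
Cauchy--Schwarz in $y$ combined with $\int_{\R}e^{-y^2}dy=\sqrt{\pi}$ yields $\norm{F^+}^2_{L^2_{\C}(\R)}\leq\sqrt{\pi}\,\norm{f^+}^2_{L^2_{\C}(\D)}$. The Bessel bound $B_1$ of $\mathcal{G}(a,b,g)$ then gives $\sum_M|\scal{f^+,\psi_M^+}|^2\leq B_1\sqrt{\pi}\,\norm{f^+}^2_{L^2_{\C}(\D)}$, and the $-$ component is handled symmetrically with $F^-(y):=\int_{\R}f^-(x,y)e^{-x^2/2}\,dx$ and Bessel bound $B_2$ of $\mathcal{G}(c,d,h)$. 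Summing the two estimates produces
$$\sum_{M}|\scal{f,\psi_M^{\nu,\mu}}|^2_{bc}\leq\sqrt{\pi}\max(B_1,B_2)\,\norm{f}^2_{bc},$$
which is the required bc-Bessel condition.

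There is no genuine obstacle here; the computation is essentially routine once the idempotent split is performed. The only point needing attention is the Fubini/Cauchy--Schwarz bookkeeping for $F^\pm$, and this same step transparently explains why the Gaussian factors in \eqref{WHbcGen} are indispensable: without them the projections $F^\pm$ would not in general lie in $L^2_{\C}(\R)$, which is precisely the mechanism responsible for the failure recorded in the preceding proposition.
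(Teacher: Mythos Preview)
Your argument is correct and follows the same overall scheme as the paper: split idempotently, reduce to showing each component family is Bessel in $L^2_{\C}(\D)$, then separate variables with Fubini and invoke the one-dimensional Bessel hypothesis together with the Gaussian weight. The only organisational difference is where the Bessel bound enters. The paper keeps the slice $\Phi_y(x):=f^+(x,y)$ fixed, applies the Bessel inequality for $\mathcal{G}(a,b,g)$ to each $\Phi_y$, and then integrates in $y$ against $e^{-y^2}$; you instead integrate out $y$ first to form the single function $F^+(x)=\int_{\R}f^+(x,y)e^{-y^2/2}\,dy$ and apply the Bessel bound once to $F^+$. Both routes yield the same constant $\sqrt{\pi}\,B_1$; your ordering is a little tidier because the Cauchy--Schwarz step controlling $\norm{F^+}_{L^2_{\C}(\R)}$ is explicit, whereas the paper's first displayed inequality tacitly absorbs the analogous step.
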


\begin{proof}
	By similar arguments as in the proof of Theorem \ref{thmchar}, we need only to prove 
	the components $e^{-y^2/2}[W^\nu_{na,mb} g] (x)$ and $e^{-x^2/2} [W^\mu_{nc,md} h] (y)$ are Bessel sequences in $L^2_{\C}(\D)$. Thus, for arbitrary $\Phi\in L^2_{\C}(\D)$, the partial function $x\longmapsto \Phi_y(x):=\Phi(x\eu+y\es)$ is clearly in $L^2_{\C}(\R)$. Moreover, 
	\begin{align*}
	\scal{\Phi, e^{-y^2/2} W^\nu_{na,mb} g }_{L^2_{\C}(\D)} 
	&= \int_{\D} \Phi(x\eu+y\es)  \left( e^{-y^2/2} W^\nu_{na,mb} g (x) \right)^{*} dxdy 
	\\&= \int_{\R}  e^{-y^2/2}  \scal{\Phi_y, W^\nu_{na,mb}g}_{L^2_{\C}(\R)} dy.
	\end{align*}
	Now, since $g$ generates a Bessel sequence in $L^2_{\C}(\R)$, we see that
	\begin{align*}
	\sum_{m\in\Z}\sum_{n\in\Z} \left|\scal{\Phi, e^{-y^2/2} W^\nu_{na,mb} g }_{L^2_{\C}(\D)} \right|^2 
	&\leq  \int_{\R}  e^{-y^2}  \sum_{m\in\Z}\sum_{n\in\Z}   \left|\scal{\Phi_y, W^\nu_{na,mb}g}_{L^2_{\C}(\R)}\right|^2 dy
	\\&\leq  \int_{\R}  e^{-y^2} \norm{\Phi_y}^{2}_{L^2_{\C}(\R)}  dy
\\&\leq c \norm{\Phi_y}^{2}_{L^2_{\C}(\D)}  
	\end{align*}
	for some constant $c$. This shows that $e^{-y^2/2} W^\nu_{na,mb} g $ is a Bessel sequence in $L^2_{\C}(\D)$.  	
\end{proof}

\section{Concluding remarks}
In this note, we have presented a natural extension of some notions from classical frame theory including operator frame and Weyl-Heisenberg frames to the bicomplex setting. We have briefly discussing their similarities and the differences to classical ones. The mean feature of bc-frame is Theorem \ref{thmchar}. The complete description of bc-frames needs further investigations. 
 However, the bc-hilbertian structure allows the consideration of non-trivial extensions for the existence of three complex conjugates and the divisibility by zero. Notice for instance that we define a $\dagger$-bc-frame to be a bicomplex sequence $(f_n)_n$ in a given infinite bc-Hilbert space $\mathcal{H}_{bc}$ if there exist $A,B>0$ such that 
$$ A\norm{f}_{bc}^2   \leq  \sum_{n=0}^\infty |\scal{f,f_n}_{bc}^\dagger  \scal{f,f_n}_{bc}^* | \leq B\norm{f}_{bc}^2$$
holds true  for every $f\in\mathcal{H}_{bc}$,
so that one recovers the classical definition if we restrict ourself to complex valued functional Hilbert spaces.  
We claim that this class possesses several interesting and surprising results that deserve spacial study. We hope to study this in detail in a forthcoming paper.

We conclude by noticing that other constructions of W-H bc-frames and bicomplex Wilson bases for bicomplex Bargmann space in \cite{ElGhZ2020} can be considered by benefiting from the rich structure of bicomplex Hilbert space, including the one the hyperbolic numbers and those arising from 
discretization of bicomplex Fourier--Wigner transform in \cite{ElGhZ2020} and associated to the bicomplex projective representations considered in \cite{ElGhZ2020}.

\quad 

\noindent{\bf Acknowledgments:}
The authors are grateful to Prof. Kabbaj Samir for inspiring discussions.
They also thank the members of Ahmed Intissar Seminar for assistance and fruitful discussions.

 \end{document}